\newtheorem{theorem}{Theorem}[section]
\newtheorem{lemma}[theorem]{Lemma}
\newtheorem{proposition}[theorem]{Proposition}
\newtheorem{corollary}[theorem]{Corollary}
\theoremstyle{definition}
\newtheorem{example}[theorem]{Example}
\theoremstyle{remark}
\newtheorem{remark}[theorem]{Remark}
\numberwithin{equation}{section}
\newcommand{\R}{\mathbb{R}}
\newcommand{\C}{\mathbb{C}}
\newcommand{\N}{\mathbb{N}}
\newcommand{\K}{\mathbb{K}}
\newcommand{\Q}{\mathbb{Q}}
\newcommand{\co}{\mathfrak{c}}
\newcommand{\nn}[1]{{\left\vert\kern-0.25ex\left\vert\kern-0.25ex\left\vert #1 
		\right\vert\kern-0.25ex\right\vert\kern-0.25ex\right\vert}}
\renewcommand{\geq}{\geqslant}
\renewcommand{\leq}{\leqslant}
\newcommand{\norm}[1]{\left\Vert#1\right\Vert}
\newcounter{smallromans}
\renewcommand{\tocsection}[3]{%
	\indentlabel{\@ifnotempty{#2}{\bfseries\ignorespaces#1 #2\quad}}\bfseries#3}
\renewcommand{\tocsubsection}[3]{%
	\indentlabel{\@ifnotempty{#2}{\ignorespaces#1 #2\quad}}#3}
\newcommand\@dotsep{4.5}
\def\@tocline#1#2#3#4#5#6#7{\relax
	\ifnum #1>\c@tocdepth 
	\else
	\par \addpenalty\@secpenalty\addvspace{#2}%
	\begingroup \hyphenpenalty\@M
	\@ifempty{#4}{%
		\@tempdima\csname r@tocindent\number#1\endcsname\relax
	}{%
		\@tempdima#4\relax
	}%
	\parindent\z@ \leftskip#3\relax \advance\leftskip\@tempdima\relax
	\rightskip\@pnumwidth plus1em \parfillskip-\@pnumwidth
	#5\leavevmode\hskip-\@tempdima{#6}\nobreak
	\leaders\hbox{$\m@th\mkern \@dotsep mu\hbox{.}\mkern \@dotsep mu$}\hfill
	\nobreak
	\hbox to\@pnumwidth{\@tocpagenum{\ifnum#1=1\bfseries\fi#7}}\par
	\nobreak
	\endgroup
	\fi}
\renewcommand\csname r@tocindent0\endcsname{0pt}
\def\l@subsection{\@tocline{2}{0pt}{2.5pc}{5pc}{}}
\begin{document}

	\title[Searching for linear structures in the failure of Stone-Weierstrass]{Searching for linear structures in the failure of the Stone-Weierstrass theorem}

		\author[Caballer]{Marc  Caballer}
	\address[Caballer]{Facultad de Ciencias Matemáticas, Universidad de Valencia, Doctor Moliner 50, 46100 Burjasot (Valencia), Spain. \newline
		\href{http://orcid.org/0000-0000-0000-0000}{ORCID: \texttt{0009-0003-3893-0131} }}
	\email{\texttt{marcago5@alumni.uv.es}}

 	\author[Dantas]{Sheldon Dantas}
	\address[Dantas]{Departamento de Análisis Matemático, Facultad de Ciencias Matemáticas, Universidad de Valencia, Doctor Moliner 50, 46100 Burjasot (Valencia), Spain. \newline
		\href{http://orcid.org/0000-0001-8117-3760}{ORCID: \texttt{0000-0001-8117-3760} } }
	\email{\texttt{sheldon.dantas@uv.es}}
	
		\author[Rodríguez-Vidanes]{Daniel L. Rodríguez-Vidanes}
	\address[Rodríguez-Vidanes]{Grupo de Investigación: Análisis Matemático y Aplicaciones, Departamento de Matemática Aplicada a la Ingeniería Industrial, Escuela Técnica Superior de Ingeniería y Diseño Industrial, Ronda de Valencia 3, Universidad Politécnica de Madrid, Madrid, 28012, Spain. \newline
		\href{http://orcid.org/0000-0000-0000-0000}{ORCID: \texttt{0000-0003-2240-2855} }}
	\email{\texttt{dl.rodriguez.vidanes@upm.es}}
			
	\begin{abstract} We investigate the failure of the Stone-Weierstrass theorem focusing on the existence of large dimensional vector spaces within the set $\mathcal{C}(L, \K) \setminus \overline{\mathcal{A}}$, where $L$ is a compact Hausdorff space and $\mathcal{A}$ is a self-adjoint subalgebra of $\mathcal{C}(L, \K)$ that vanishes nowhere on $L$ but does not necessarily separate the points of $L$. We address the problem of finding the precise codimension of $\overline{\mathcal{A}}$ in a broad setting, which allows us to describe the lineability of $\mathcal{C}(L, \K) \setminus \overline{\mathcal{A}}$ in detail. Our analysis yields both affirmative and negative results regarding the lineability of this set. Furthermore, we also study the set $(\mathcal{C}(\partial{D}, \C) \setminus \overline{\text{Pol}(\partial{D})}) \cup \{0\}$, where $\text{Pol}(\partial{D})$ is the set of all complex polynomials in one variable restricted to the boundary of the unit disk. Recent lineability properties are also taken into account.

	\end{abstract}

	\thanks{}

	\subjclass[2020]{Primary 46B87; Secondary 15A03, 41A10, 46J10, 46E15}
	\keywords{Lineability; spaceability; dense-lineability; algebrability; Stone-Weierstrass theorem; self-adjoint algebras}
	
	\maketitle
	
	
	\thispagestyle{plain}

\section{Introduction}

\subsection{Motivation} The Stone-Weierstrass theorem states that if $L$ is a compact Hausdorff space and $\mathcal{A}$ is a subalgebra of $\mathcal{C}(L, \R)$ that separates the points of $L$, then $\overline{\mathcal{A}} = \mathcal{C}(L, \R)$. However, this result does not extend to the complex field. In fact, as shown in works dating back to the 1800s, not every compact subset of $\C$ satisfies this theorem. A classic counterexample is when $L \subseteq \C$ is the unit circle and $\mathcal{A}$ is the subalgebra of $\mathcal{C}(L, \C)$ consisting of all complex polynomials restricted to $L$; in this case, $\mathcal{A}$ is not dense in $\mathcal{C}(L, \C)$, as $\overline{\mathcal{A}}$ does not contain the map $z \mapsto \bar{z}$ (see, for instance, \cite[Example 3.120]{S}).

In this paper, we investigate the linear structures within the set $\mathcal{C}(L, \K) \setminus \overline{\mathcal{A}}$ (for both $\K = \R$ and $\K = \C$) whenever it is non-empty, that is, when the Stone-Weierstrass theorem does not hold for some compact $L$. It is well-known that $\mathcal{C}(L, \K) \setminus \overline{\mathcal{A}}$ contains a vector space of dimension $\kappa$ if and only if $\kappa \leq \text{codim}(\overline{\mathcal{A}})$.
However, determining the exact value of $\text{codim}(\overline{\mathcal{A}})$ does not seem to be straightforward. Therefore, our primary goal is to precisely compute $\text{codim}(\overline{\mathcal{A}})$ by relating it to more accessible and manageable concepts within a broad framework, allowing for the exact determination of $\text{codim}(\overline{\mathcal{A}})$ in various examples.

The study of these problems, known as lineability, has been an active area of research since its introduction by V. I. Gurariy in the early 2000s, with significant developments in various areas of mathematics (see the monograph \cite{ABPS} and papers \cite{BPS, FGK, GQ, LRS, LM, Rm, SS}, among others).

Concerning the Stone-Weierstrass theorem, it is worth mentioning that there has been works on identifying conditions under which a compact Hausdorff space $L$ ensures that every subalgebra of $\mathcal{C}(L, \C)$ that separates points and contains the constant functions is dense in $\mathcal{C}(L, \C)$. This is known as the complex Stone-Weierstrass property (CSWP, for short). For example, W. Rudin showed that if $L$ contains a copy of the Cantor set, then $L$ does not satisfy the CSWP; conversely, if $L$ is scattered, it does satisfy the CSWP. For more on this line of research, see \cite{HK, K, Rudin}.

\subsection{Description of our results} 
Let us briefly describe the contents of the present paper. In Section~\ref{preliminaries}, we show the main concepts, notation and tools used throughout this work. The main lineability concepts used here are defined, the tools needed in order to tackle the lineability properties in the context of the failure of Stone-Weierstrass theorem are established and the set-theoretical background is presented. In particular, an equivalence relation $\sim_\mathcal{A}$ is defined on a subalgebra $\mathcal{A}$ of $\mathcal{C}(L, \K)$, which will play a crucial role in the main results, and we introduce the order of $\sim_{\mathcal A}$, denoted by $\textup{ord}(\sim_{\mathcal A})$, which is a cardinal number that depends on $\sim_\mathcal{A}$. In Section~\ref{main}, we state and prove our main results.  It is divided into three subsections. In the first subsection, we show a negative lineability result by proving that $\mathcal{C}(L, \K) \setminus \overline{\mathcal A}$ is $\textup{ord}(\sim_{\mathcal A})$-lineable but not $(\textup{ord}(\sim_{\mathcal A})+1)$-lineable provided that the order of $\sim_{\mathcal A}$ is finite (see Theorem \ref{nonlineablereal}). This provides a consequence on latticeability as well (see Corollary \ref{latticeable}) In the second subsection, we tackle the case when the order of $\sim_{\mathcal A}$ is infinite (see Proposition \ref{linnumber}). We obtain a general result that goes deeper into the failure of the Stone-Weierstrass theorem and depends on a notion that we introduce, which is called the Stone-Weierstrass character of a compact Hausdorff space $L$. We analyze some of its properties by showing its relation to the topological weight of $L$ and generalize the previous lineability results. To be more precise, the notion of the Stone-Weierstrass character provides valuable information regarding lineability properties of $\mathcal{C}(L,\K)\setminus\overline{\mathcal{A}}$ even in more general sets (see Theorem \ref{thmcharacter} and Proposition \ref{prop-sw(S)}). Using Theorem \ref{thmcharacter}, we are able to provide more negative lineability results (see Proposition \ref{prpcont} and Corollary \ref{cor:prpcont}). Finally, in Section \ref{complex-plane}, the failure of the classical Stone-Weierstrass theorem is analyzed in the context of lineability improving and deepening already known results. Recent notions of lineability are also studied in this context.

\section{Preliminaries}\label{preliminaries}

\subsection{Basic notation and first tools} In this section, we introduce the notation, definitions and all the results that we need throughout the paper. In fact, our intention here is to make this manuscript entirely accessible with the intention that the reader avoids jumping into many different references. Let us denote by $\K$ either the set of real numbers $\R$ or the set of complex numbers $\C$. All the topological spaces $L$ considered in this paper are compact Hausdorff unless otherwise stated. The Banach space $\mathcal C(L, \K)$ stands for all continuous functions defined on $L$ with the standard sup-norm. Let $\mathcal{A}$ be a set of functions on $L$. We say that $\mathcal{A}$ {\it separates the points of $L$} if, whenever $x\not=y$ in $L$, there is a function $f \in \mathcal{A}$ such that $f(x) \not= f(y)$. We say that $\mathcal{A}$ {\it vanishes nowhere on $L$} if, for every $x \in L$, there exists $f \in \mathcal{A}$ such that $f(x) \not= 0$. We say that $\mathcal{A}$ is {\it self-adjoint} whenever $f \in \mathcal{A}$ we have that $\overline{f} \in \mathcal{A}$. 
If $\rho$ is an equivalent relation in $L \times L$, we will write $\rho \subseteq L \times L$ to represent the elements of $L \times L$ which are related by $\rho$. By a subalgebra we understand a linear subspace closed under product.

We will be using basic concepts and notations from Set Theory found, for instance, in \cite{C, J}. Ordinal numbers will be identified with the set of their predecessors and cardinal numbers with the initial ordinals. Given a set $A$, the cardinality of $A$ will be denoted by $|A|$. We denote by $\aleph_0$, $\aleph_1$ and $\co$ the first infinite cardinal, the second infinite cardinal and the cardinality of the continuum, respectively.

\vspace{0.2cm}
We consider the map 
    \begin{align*}
        \Delta:\mathcal{C}(L,\K) & \to\mathcal{C}(L\times L,\K) \\
        f & \mapsto \Delta(f)(x,y) := f(x)-f(y) 
    \end{align*}
which sends a continuous function into the difference with itself. Clearly, $\Delta$ is a continuous linear operator. Given a family of functions $F\subseteq\mathcal{C}(L,\K)$, we consider an equivalence relation $\sim_{F}$ on $L$ defined by 
\begin{equation*} 
    x\sim_{F}y \iff f(x)=f(y) \text{ for all } f \in F.
\end{equation*} 
Let us notice that $F$ separates points if and only if $\sim_F$ coincides with the diagonal 
\begin{equation*} 
D_L:=\{(x,x):x\in L\}. 
\end{equation*}
Moreover, for any $F\subseteq\mathcal C(L,\K)$, the equivalence relation $\sim_F\subseteq L\times L$ is closed as a subspace since
\begin{equation*} 
    \sim_F=\bigcap_{f\in F}\{(x,y)\in L\times L:f(x)=f(y)\}=\bigcap_{f\in F}\Delta(f)^{-1}[\{0\}].
\end{equation*} 
So, from now on, $\rho\subseteq L\times L$ will denote a closed equivalence relation which, in turn, will be compact Hausdorff since $L\times L$ is compact Hausdorff. Notice that $L/\rho$ will always be compact but not necessarily Hausdorff.

For a closed subspace $S\subseteq L$, the restriction map $\text{res}_S^L : \mathcal{C}(L,\K) \to \mathcal{C}(S,\K)$, which sends any $f \in \mathcal{C}(L, \K)$ to $f|_S$ will be of interest for us. Let us notice that by using Tietze's extension theorem, the function $\text{res}_S^L$ is always surjective. With this in mind, we then consider the map $\Delta_\rho:=\text{res}_{\rho}^{L\times L}\circ\Delta:\mathcal{C}(L,\K)\to\mathcal{C}(\rho,\K)$ and define $\mathcal{C}_\rho(L,\K):=\text{Ker}(\Delta_\rho)$ and $\mathcal{C}_L(\rho,\K):=\text{Im}(\Delta_\rho)$ Hence, we have that 
\begin{equation*} 
    \mathcal{C}_\rho(L,\K)=\{f\in\mathcal{C}(L,\K):x\rho y\Rightarrow f(x)=f(y)\}.
\end{equation*}
This means that, for any $F\subseteq\mathcal{C}(L,\K)$, we have $F\subseteq\mathcal{C}_{\sim_F}(L,\K)$. On the other hand, let us observe that $\mathcal{C}_\rho(L,\K)$ can be seen as an image, that is, the map $\Psi:\mathcal{C}(L/\rho,\K)\to\mathcal{C}(L,\K)$ defined by $f \mapsto f\circ\pi$, where $\pi$ is the quotient map, is an isometric homomorphism between Banach algebras and the range of $\Psi$ is precisely $\mathcal{C}_\rho(L,\K)$. 
In other words, $\mathcal{C}_\rho(L,\K)$ is isometrically isomorphic to $\mathcal{C}(L/\rho,\K)$. Also note that $\mathcal{C}_\rho(L,\K)$ is closed and so $\mathcal{C}_L(\rho,\K)\cong\mathcal{C}(L,\K)/\mathcal{C}_L(\rho,\K)$ is a Banach space.

\vspace{0.5cm}
With all this notation in mind, let us recall the following Stone-Weierstrass type theorem, which can be found, for instance, in \cite[Exercise~10, page 158]{S}. Since we will be using Lemma~\ref{lemrealequi} several times throughout the paper, we provide its proof for the sake of completeness. Notice that it holds true for both real and complex cases. 

\begin{lemma} \label{lemrealequi} Let $\mathcal{A}\subseteq\mathcal{C}(L,\K)$ be a self-adjoint algebra over $\K$ that vanishes nowhere on $L$. Then $\mathcal{A}$ is dense in $\mathcal{C}_{\sim_{\mathcal{A}}}(L,\K)$.
\end{lemma}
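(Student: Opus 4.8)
The plan is to transport the whole problem to the quotient space $L/\!\sim_{\mathcal{A}}$, where the relevant algebra separates points, and then to invoke the classical Stone-Weierstrass theorem (in its real form and in its self-adjoint complex form). The excerpt already records that $\Psi\colon\mathcal{C}(L/\!\sim_{\mathcal{A}},\K)\to\mathcal{C}(L,\K)$, $f\mapsto f\circ\pi$, is an isometric algebra isomorphism onto $\mathcal{C}_{\sim_{\mathcal{A}}}(L,\K)$; since an isometry carries dense sets to dense sets, it suffices to exhibit a dense subalgebra of $\mathcal{C}(L/\!\sim_{\mathcal{A}},\K)$ whose image under $\Psi$ is exactly $\mathcal{A}$.

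First I would check that $L/\!\sim_{\mathcal{A}}$ is compact Hausdorff. Compactness is immediate, being a continuous image of $L$. For the Hausdorff property, note that each $f\in\mathcal{A}$ is constant on $\sim_{\mathcal{A}}$-classes, hence descends to a continuous function $\tilde f$ on $L/\!\sim_{\mathcal{A}}$ with $\tilde f\circ\pi=f$; given two distinct classes $[x]\neq[y]$, the very definition of $\sim_{\mathcal{A}}$ furnishes some $f\in\mathcal{A}$ with $f(x)\neq f(y)$, so $\tilde f([x])\neq\tilde f([y])$, and pulling back disjoint neighbourhoods of these two scalars separates $[x]$ and $[y]$. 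This is the one point where the warning that general quotients $L/\rho$ need not be Hausdorff has to be confronted, and it is the step I would be most careful about.

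Next I would set $\tilde{\mathcal{A}}:=\{\tilde f\colon f\in\mathcal{A}\}\subseteq\mathcal{C}(L/\!\sim_{\mathcal{A}},\K)$ and verify, from the corresponding properties of $\mathcal{A}$, that it is a subalgebra that separates the points of $L/\!\sim_{\mathcal{A}}$ (just shown), that it vanishes nowhere on it (for each $[x]$ pick $f\in\mathcal{A}$ with $f(x)\neq 0$, so that $\tilde f([x])\neq 0$), and that it is self-adjoint (because $\overline{\tilde f}=\widetilde{\overline f}$ and $\overline f\in\mathcal{A}$). By construction $\Psi(\tilde f)=\tilde f\circ\pi=f$, so $\Psi(\tilde{\mathcal{A}})=\mathcal{A}$.

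Finally I would apply the classical Stone-Weierstrass theorem on the compact Hausdorff space $L/\!\sim_{\mathcal{A}}$: in the real case the separation together with the vanishing-nowhere hypothesis already forces $\tilde{\mathcal{A}}$ to be dense in $\mathcal{C}(L/\!\sim_{\mathcal{A}},\R)$, and in the complex case self-adjointness allows the identical conclusion in $\mathcal{C}(L/\!\sim_{\mathcal{A}},\C)$. Transporting this density back through the isometry $\Psi$ then yields that $\mathcal{A}=\Psi(\tilde{\mathcal{A}})$ is dense in $\Psi\big(\mathcal{C}(L/\!\sim_{\mathcal{A}},\K)\big)=\mathcal{C}_{\sim_{\mathcal{A}}}(L,\K)$, which is precisely the assertion. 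The only real work lies in the Hausdorffness reduction; once that is in place, everything reduces to the standard theorem applied on the quotient.
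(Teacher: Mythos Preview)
Your proposal is correct and follows essentially the same route as the paper: pass to the quotient $L/\!\sim_{\mathcal{A}}$ via the isometric algebra isomorphism $\Psi$, verify that the transported algebra $\tilde{\mathcal{A}}=\Psi^{-1}[\mathcal{A}]$ separates points, vanishes nowhere and is self-adjoint, apply the classical Stone--Weierstrass theorem there, and push the density back. The only difference is that you explicitly verify that $L/\!\sim_{\mathcal{A}}$ is Hausdorff (a point the paper itself flags as not automatic for general $L/\rho$ but then leaves implicit in the proof), so if anything your write-up is slightly more complete.
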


\begin{proof} Since $\mathcal{C}_{\sim_{\mathcal{A}}}(L,\K)$ is closed, we already have that $\overline {\mathcal{A}}\subseteq\mathcal{C}_{\sim_{\mathcal{A}}}(L,\K)$. In order to show the other inclusion, we can proceed as follows. As $\Psi:\mathcal{C}(L/\sim_{\mathcal{A}},\K) \to \mathcal{C}_{\sim_{\mathcal{A}}}(L,\K)$ is an isomorphism between Banach algebras, we know that $\Psi^{-1}[\mathcal A]\subseteq\mathcal{C}(L/\sim_{\mathcal{A}},\K)$ is a subalgebra. 
Moreover, given $[x],[y]\in L /\sim_{\mathcal{A}}$ distinct, we have that $x\not\sim_{\mathcal{A}}y$, so there is an $f\in\mathcal{A}$ such that $f(x)\neq f(y)$.
So, there exists a $g\in\Psi^{^{-1}}[\mathcal A]$ with $g([x])\neq g([y])$, that is, $\Psi^{-1}[\mathcal A]$ separates points. Assume now that $\Psi^{^{-1}}[\mathcal A]$ vanishes at some point $[x]\in L/\sim_{\mathcal A}$, i.e., $g([x])=0$ for every $g\in \Psi^{^{-1}}[A]$.
Then, since the range of $\Psi$ is $\mathcal{C}_{\sim_{\mathcal{A}}}(L,\K)$, $\mathcal A\subseteq \mathcal{C}_{\sim_{\mathcal{A}}}(L,\K)$ and $\Psi (g)(x)=(g \circ \pi)(x)=g([x])=0$ for every $g\in \Psi^{^{-1}}[\mathcal A]$, it would yield that $f(x)=0$ for every $f\in \mathcal A$. Therefore, $\mathcal A$ would vanish at $x\in L$, which is a contradiction. Now, let us show that $\Psi^{-1}[\mathcal A]$ is self-adjoint. Take $f\in \Psi^{-1}[\mathcal A]$. Then, $\Psi(f)=f\circ \pi \in \mathcal A$ and, by the self-adjointness of $\mathcal A$, we have 
$\mathcal A \ni \overline{f\circ \pi} = \bar{f} \circ \pi = \Psi (\bar{f})$. So, $\bar{f}\in \Psi^{-1}[\mathcal A]$, as needed. Then, by the Stone-Weierstrass theorem, $\Psi^{-1}[\mathcal{A}]$ is dense in $\mathcal{C}(L/\sim_{\mathcal{A}},\K)$ and so $\mathcal{A}$ is dense in $\mathcal{C}_{\sim_{\mathcal{A}}}(L,\K)$. This completes the proof. 
\end{proof}

Let us point out that, in view of Lemma~\ref{lemrealequi}, since $\mathcal{C}(L,\K)\setminus\overline{\mathcal A}=\mathcal{C}(L,\K)\setminus\mathcal{C}_{\sim_{\mathcal A}}(L,\K)$, where $\mathcal A \subseteq \mathcal{C}(L,\K)$ is a self-adjoint algebra that vanishes nowhere on $L$, we can stretch our study to $\mathcal{C}(L,\K)\setminus\mathcal{C}_{\rho}(L,\K)$ for closed equivalence relations $\rho\subseteq L\times L$. 

\subsection{Lineability concepts} \label{lineabilitynotation} In this short section, we define the main concepts related to lineability that will appear in this work. 

\vspace{0.2cm}
Let $V$ be a vector space, $M$ a subset of $V$ and $\kappa$ a cardinal number. Let us denote by $\dim(V)$ the algebraic dimension of $V$ over the field $\mathbb K$. We say that $M$ is $\kappa$-{\it lineable} if $M \cup \{0\}$ contains a vector subspace of $V$ of dimension $\kappa$.
In addition, if $V$ is a topological vector space, then we say that $M$ is
		\begin{itemize}
			\setlength\itemsep{0.3em}
		\item[(i)] $\kappa$-{\it spaceable} if $M \cup \{0\}$ contains a closed vector subspace of $V$ of dimension $\mu$. 
		\item[(ii)] {\it maximal-spaceable} if $M$ is $\dim(V)$-spaceable.
        \item[(iii)] {\it $\kappa$-dense-lineable} if $M\cup \{0\}$ contains a dense vector subspace $V$ of dimension $\kappa$.
		\end{itemize}
  

\vspace{0.2cm}
V.V. Fávaro, D. Pellegrino, and D. Tomaz introduced the following more restrictive notion of spaceability called $(\alpha, \beta)$-spaceability (see \cite{FPT}). Let $\alpha\leq \beta$ be two cardinal numbers. 
We say that $M$ is $(\alpha,\beta)$-{\it spaceable} if $M$ is $\alpha$-lineable and for each $\alpha$-dimensional vector subspace $V_\alpha$ of $V$ with $V_\alpha \subseteq M\cup \{0\}$, there is a closed $\beta$-dimensional subspace $V_\beta$ of $V$ such that $V_\alpha \subseteq V_\beta \subseteq M\cup \{0\}$. We see that $(\alpha,\alpha)$-spaceability implies $\alpha$-spaceability, but the converse is not true in general. Indeed, it is well-known that the set $L_p[0,1] \setminus \bigcup_{q \in (1,\infty)} L_q [0,1]$ is $\mathfrak c$-spaceable for any $p>0$ (see \cite{BFPS}) and, as an immediate consequence of \cite[Corollary~2.4]{FPRR}, the set $L_p[0,1] \setminus \bigcup_{q \in (1,\infty)} L_q [0,1]$ is not $(\mathfrak c,\mathfrak c)$-spaceable. It is worth mentioning that Araújo, Barbosa, Raposo Jr. and Ribeiro proved in fact that 
$L_p[0,1] \setminus \bigcup_{q \in (1,\infty)} L_q [0,1]$ is $(\alpha,\mathfrak c)$-spaceable if and only if $\alpha < \aleph_0$ (see \cite[Theorem~3]{ABRR}).
It is clear that $\alpha_1$-spaceability implies $\alpha_2$-spaceability if $\alpha_2\leq \alpha_1$, but the converse is not true in general.
In the context of $(\alpha,\beta)$-spaceability, $(\alpha_1,\beta)$-spaceability does not imply $(\alpha_2,\beta)$-spaceability and viceversa. 

\vspace{0.2cm}

Now, let us come back to our main problem. The notation is the one introduced in the previous section. Recall that $\mathcal{C}(L,\K)\setminus\mathcal{C}_{\rho}(L,\K)$ is $\kappa$-lineable if and only if $\kappa\leq\text{codim}(\mathcal{C}_{\rho}(L,\K))$. Furthermore, by a result of Kitson and Timoney (see \cite[Theorem~2.2]{KT} and also \cite[p. 12]{W} by Wilansky), we have that $\mathcal{C}(L,\K)\setminus\mathcal{C}_{\rho}(L,\K)$ is spaceable if and only if $\text{codim}(\mathcal{C}_{\rho}(L,\K))$ is infinite as $\mathcal{C}(L,\K)$ is a Fréchet space and $\mathcal{C}_{\rho}(L,\K)$ is a closed subspace. In fact, by a more recent result due to  P. Leonetti, T. Russo, and J. Somaglia (see \cite[Corollary~3.2]{LRS} or \cite[Lemma~3.1]{LRS}), if $\text{dens}(\mathcal{C}(L,\K)) \leq \text{codim}(\mathcal{C}_{\rho}(L,\K))$, then $\mathcal{C}(L,\K)\setminus\mathcal{C}_{\rho}(L,\K)$ is $\text{dens}(\mathcal{C}(L,\K))$-dense-lineable.
Now notice that
\begin{equation*} 
    \text{codim}(\mathcal{C}_\rho(L,\K))=\text{codim}(\text{Ker}(\Delta_\rho))=\text{dim}(\text{Im}(\Delta_\rho))=\text{dim}(\mathcal{C}_L(\rho,\K)).
\end{equation*} 
So, in view of the above arguments, the study of the $\kappa$-lineability of $\mathcal{C}(L,\K)\setminus\overline{\mathcal A}$ depends solely on the dimension of the Banach space $\mathcal{C}_L(\sim_{\mathcal A},\K)$.
\\

\subsection{Representatives of an equivalence relation} We say that $R\subseteq L$ is a {\it set of representatives of $\rho$} if there is a map $\xi_R : L/\rho\to L$ such that $\xi_R[L/\rho]=R$ and $\pi\circ\xi_R=\text{id}_{L/\rho}$. 
That is, $R$ is a set of representatives if it is a set that contains one and only one element of each equivalence class of $\rho$. Moreover, note that the map $\xi_R$ is unique for a given set of representatives $R$. Therefore, we will call {\it $\xi_R$ the associated map of $R$}. It is clear that, if $R_1$ and $R_2$ are two such sets, then $|R_1|=|L/\rho|=|R_2|$. In fact, we have also that $|L\setminus R_1|=|L\setminus R_2|$ as shown by the following set theoretical lemma. The symbol $\sqcup$ below stands for a disjoint union of sets, which plays an important role in the proof of the lemma.

\begin{lemma}\label{invariant} Let $R_1,R_2\subseteq L$ be two sets of representatives of $\rho$. Then $|L\setminus R_1|=|L \setminus R_2|$.
\end{lemma}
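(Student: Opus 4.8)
The plan is to produce an explicit bijection of $L$ that maps $R_1$ onto $R_2$. Any bijection of $L$ sending $R_1$ to $R_2$ automatically sends the complement $L\setminus R_1$ onto $L\setminus R_2$, so such a map gives $|L\setminus R_1|=|L\setminus R_2|$ at once. The construction is carried out class by class, which is exactly where the disjoint-union decomposition $L=\bigsqcup_{C\in L/\rho}C$ is used: each class $C$ meets a set of representatives in precisely one point, so the representatives $r_1(C):=\xi_{R_1}(C)$ and $r_2(C):=\xi_{R_2}(C)$ are well defined and both lie in $C$.

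First I would introduce, for each class $C$, the transposition $\tau_C$ of $L$ that swaps $r_1(C)$ and $r_2(C)$ (taken to be the identity when these coincide). The supports of the $\tau_C$ lie in the pairwise disjoint classes, so they do not interfere; hence $\Phi(x):=\tau_{[x]}(x)$ is a well-defined involution of $L$, and in particular a bijection, requiring no choices beyond the given selection maps $\xi_{R_1},\xi_{R_2}$. Note that $\Phi$ maps each class into itself, which makes the involution property immediate: $\Phi(x)$ stays in $[x]$, so $\Phi(\Phi(x))=\tau_{[x]}(\tau_{[x]}(x))=x$.

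Next I would check that $\Phi[R_1]=R_2$. For $x\in R_1$ we have $x=r_1([x])$, whence $\Phi(x)=r_2([x])\in R_2$; conversely every $y=r_2([y])\in R_2$ equals $\Phi(r_1([y]))$. Thus $\Phi$ is a bijection of $L$ carrying $R_1$ onto $R_2$, and therefore carrying $L\setminus R_1$ onto $L\setminus R_2$, giving the desired equality $|L\setminus R_1|=|L\setminus R_2|$.

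The only delicate point is the bookkeeping that guarantees $\Phi$ is genuinely a well-defined involution: one must confirm that the transpositions in distinct classes have disjoint supports and that the overlapping case $R_1\cap R_2$ (where $r_1(C)=r_2(C)$) causes no clash — both of which follow from the defining property that each class contributes exactly one point to each $R_i$. I expect this to be the main, though minor, obstacle. A purely cardinal-arithmetic route is also available, namely $|L\setminus R_i|=\sum_{C\in L/\rho}|C\setminus R_i|$ together with $|C\setminus R_1|=|C|-1=|C\setminus R_2|$ for every $C$; this works too, but assembling the per-class bijections into a single one tacitly uses the axiom of choice, whereas the explicit involution avoids it.
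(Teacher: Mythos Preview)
Your argument is correct. The involution $\Phi$ you build is well defined, maps each class to itself, and swaps $R_1$ with $R_2$, so it restricts to a bijection $L\setminus R_1\to L\setminus R_2$.

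The paper's proof and yours actually produce the \emph{same} bijection on the complements, just reached by different bookkeeping. The paper works directly on $L\setminus R_1$: it decomposes $L\setminus R_i=(R_j\setminus R_i)\sqcup\bigl(L\setminus(R_1\cup R_2)\bigr)$, uses $\xi_{R_1}\circ\pi$ to send $R_2\setminus R_1$ to $R_1\setminus R_2$, and the identity on the common remainder. Unravelling your $\Phi$ on $L\setminus R_1$ gives exactly this: on $R_2\setminus R_1$ you have $x=r_2([x])\mapsto r_1([x])=\xi_{R_1}(\pi(x))$, and on $L\setminus(R_1\cup R_2)$ the transposition $\tau_{[x]}$ fixes $x$. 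Your packaging as a single involution of the whole of $L$ is cleaner and makes bijectivity immediate, whereas the paper has to verify separately that $\xi_{R_1}\circ\pi$ restricts to a bijection $R_2\setminus R_1\to R_1\setminus R_2$. Your closing remark about the cardinal-arithmetic alternative and its implicit use of choice is a nice observation, though not needed for the argument.
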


\begin{proof}
Let $\xi_1,\xi_2:L/\rho\to L$ be the associated maps of $R_1$ and $R_2$, respectively.
Then, consider the map $\xi_1\circ\pi:L\to L$ and denote $R_2'=R_1\setminus R_2,R_1'=R_2\setminus R_1$ and $I=R_1\cap R_2$. Now note that
    $$
    (\xi_1\circ\pi)[R_2]=\xi_1[\pi[R_2]]=\xi_1[L/\rho]=R_1
    $$
and that $(\xi_1\circ\pi)\vert_{R_2}$ is injective since, for any $x,y\in R_2$ such that $(\xi_1\circ\pi)(x)=(\xi_1\circ\pi)(y)$, we have
    $$
    \xi_1([x])=\xi_1([y])\Rightarrow[x]=[y]\Rightarrow x\sim y\Rightarrow x=y
    $$
as $\pi\circ\xi_1=\text{id}_{L/\rho}$ implies that $\xi_1$ is injective and class representatives in $R_2$ are unique. 
Hence, $\xi_1\circ\pi:R_2\to R_1$ defines a bijection with $(\xi_1\circ\pi)[I]=I$ since $(\xi_1\circ\pi)(x)=x$ for every $x\in R_1$ and, in particular, for every $x\in I$. This means that $\xi_1\circ\pi:R_1'\to R_2'$ is a bijection as $R_2=R_1'\sqcup I$ and $R_1=R_2'\sqcup I$. Then, since 
    \begin{equation}\label{identity}
        L\setminus R_i=R_i'\sqcup(L\setminus (R_1\cup R_2))
    \end{equation} 
for every $i\in \{1,2\}$, we define $\Xi: L \setminus R_1 \to L \setminus R_2$ by
    $$
    \Xi(x)=
    \begin{cases}(\xi_1\circ\pi)(x)&\text{if }x\in R_1',
    \\ x&\text{if }x\in L\setminus (R_1\cup R_2).
    \end{cases}
    $$
Since $\Xi|_{R_1'}:R_1'\to R_2'$ and $\Xi|_{L\setminus (R_1\cup R_2)} : L\setminus (R_1\cup R_2)\to L\setminus (R_1\cup R_2)$ are bijections, it follows from \eqref{identity} that $\Xi: L \setminus R_1 \to L\setminus R_2$ is a bijection and we are done.
\end{proof}

As the cardinal number $|L\setminus R|$ does not depend on the set of representatives $R$ of $\rho$ (by Lemma~\ref{invariant}), we call this cardinal invariant of $\rho$ the \emph{order} of $\rho$ and denote it by $\text{ord}(\rho)$. In fact, we can compute the order of $\rho$ by using the following formula
    \begin{equation}\label{suboptimarorder}
    \text{ord}(\rho)=|L\setminus R|=\left|\bigsqcup_{r\in R}[r]\setminus R\right|=\sum_{r\in R}|[r]\setminus\{r\}|.
    \end{equation}

We say that $\rho$ is \emph{finite} whenever it has finite order and \emph{infinite} otherwise. 
It is not hard to see that $\rho$ is finite if and only if the classes are finite and $\{[x]\in L/\rho:|[x]|>1\}$ is finite. Therefore, if $\rho$ is finite, then we can describe \eqref{suboptimarorder} as
\begin{equation*}
    \text{ord}(\rho) = \sum_{[x]\in L/\rho} (|[x]|-1)
\end{equation*} 
which is a finite sum of finite numbers. By infinite cardinal arithmetic (see, for instance, \cite[Chapter~5]{J}), if $\rho$ is infinite, then this identity holds true by identifying $\kappa-1:=\kappa$ for infinite cardinals $\kappa$. Let us notice that $\text{ord}(\rho)$ describes the number of identifications needed to fully describe $\rho$.

\begin{example} If $\sim$ is an equivalence relation of $L$ given by $x_1\sim x_2\sim x_3$ and $y_1\sim y_2$ provided that $x_1$, $x_2$, $x_3$, $y_1$ and $y_2$ are distinct and any other element of $L$ is related by $\sim$ only to itself, then $\sim$ is finite since for all $x\in L$ we have that $|[x]|\leq 3$ and also $|\{ [x] \in L/\sim\ :|[x]|>1\}|=2$ and its order is
\begin{equation*} 
    \text{ord}(\sim)=(|\{x_1,x_2,x_3\}|-1)+(|\{y_1,y_2\}|-1)=2+1=3.
\end{equation*}
\end{example} 

\vspace{0.4cm}
The following lemma is of crucial importance and interest on its own.
It provides an upper bound for the dimension of vector spaces inside the set $\mathcal C(L,\K) \setminus \mathcal C_\rho(L,\K)$ and, more specifically, for $\mathcal C(L,\K) \setminus \overline{\mathcal A}$ by Lemma~\ref{lemrealequi}. To be more precise, it shows that 
\begin{equation*} 
    \text{dim}(\mathcal C_L(\rho,\K))\leq \text{dim}(\K^{\text{ord}(\rho)}).
\end{equation*} 

\begin{lemma}\label{orderinjection}
There exists a linear injection from $\mathcal{C}_L(\rho,\K)$ into $\K^{\textup{ord}(\rho)}$.
\end{lemma}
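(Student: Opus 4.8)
The plan is to realize the injection as evaluation along a canonical family of pairs indexed by a set of representatives. Fix a set of representatives $R\subseteq L$ of $\rho$ with associated map $\xi_R:L/\rho\to L$, and for $x\in L$ abbreviate $r(x):=\xi_R([x])\in R$, the representative of the class of $x$. Three properties of $r$ drive the argument: first, $r(x)=x$ for every $x\in R$, since $\pi\circ\xi_R=\textup{id}_{L/\rho}$ forces $\xi_R$ to fix representatives; second, $(x,r(x))\in\rho$ for every $x\in L$, because $x$ and $r(x)$ belong to the same class; and third, $r(x)=r(y)$ whenever $(x,y)\in\rho$, since then $[x]=[y]$.

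With this in hand I would define the candidate map $T:\mathcal{C}_L(\rho,\K)\to\K^{L\setminus R}$ by $T(g):=\big(g(x,r(x))\big)_{x\in L\setminus R}$. Note that $T$ is well defined with no reference to a preimage under $\Delta_\rho$: each coordinate is simply the value of the given continuous function $g\in\mathcal{C}(\rho,\K)$ at the point $(x,r(x))\in\rho$. Linearity of $T$ is then immediate from the linearity of evaluation. Since $|L\setminus R|=\textup{ord}(\rho)$ by the definition of the order, $\K^{L\setminus R}$ is (isomorphic to) $\K^{\textup{ord}(\rho)}$, so it remains only to check injectivity.

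For injectivity I would exhibit a reconstruction formula showing that $g$ is completely determined by $T(g)$. Write $g=\Delta_\rho(f)$ for some $f\in\mathcal{C}(L,\K)$, so that $g(x,y)=f(x)-f(y)$ for all $(x,y)\in\rho$, and extend $T(g)$ to all of $L$ by setting $T(g)(x):=0$ for $x\in R$ (consistent, since then $g(x,r(x))=g(x,x)=0$). For an arbitrary $(x,y)\in\rho$ the third property gives $r(x)=r(y)$, hence $f(r(x))=f(r(y))$, and therefore
$$g(x,y)=\big(f(x)-f(r(x))\big)-\big(f(y)-f(r(y))\big)=g(x,r(x))-g(y,r(y))=T(g)(x)-T(g)(y).$$
This identity does not depend on the choice of $f$ representing $g$, and in particular $T(g)=0$ forces $g(x,y)=0$ for every $(x,y)\in\rho$, i.e.\ $g=0$. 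Thus $T$ is the desired linear injection.

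The argument is short, and the only point requiring care is the reconstruction identity: one must verify that the difference $f(x)-f(y)$ telescopes through the representatives, which hinges precisely on $r$ being constant on $\rho$-classes and on identifying $f(x)-f(r(x))$ with the coordinate $g(x,r(x))$. Everything else—well-definedness, linearity, and the cardinality bookkeeping $|L\setminus R|=\textup{ord}(\rho)$—is routine.
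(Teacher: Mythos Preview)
Your proof is correct and follows essentially the same approach as the paper: the map $T$ is exactly the paper's map $J$ (with $r(x)$ in place of $\xi_R([x])$), and your reconstruction identity $g(x,y)=T(g)(x)-T(g)(y)$ is precisely the telescoping computation the paper uses to show trivial kernel. Your presentation is slightly more explicit about the properties of $r$ and the well-definedness of $T$, but the argument is the same.
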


\begin{proof}
Take $R$ to be any set of representatives of $\rho$ and consider the map
    \begin{align*}
        J:\mathcal{C}_L(\rho,\K) & \to\K^{L\setminus R} \\
        f & \mapsto J(f)(x)=f(x,\xi_R([x]))
    \end{align*}
where $\xi_R$ is the associated map of $R$.
The map $J$ is clearly linear. Let us prove that it has trivial kernel. 
Note that, if $f\in\mathcal{C}_L(\rho,\K)$ is such that $J(f)\equiv 0$, i.e., $f(x,\xi_R([x]))=0$ for every $x\in L\setminus R$, then we have that $f(x,\xi_R([x]))=0$ for every $x\in R$ since $f\in\mathcal{C}_L(\rho,\K)$ implies $f\vert_{D_L}\equiv 0$. 
In turn, given $(x,y)\in\rho$, we have
    $$
    f(x,y)=f(x,\xi_R([x]))-f(y,\xi_R([x]))=f(x,\xi_R([x]))-f(y,\xi_R([y]))=0-0=0.
    $$
Since there is a linear bijection $L$ from $\K^{L\setminus R}$ to $\K^{|L\setminus R|}=\K^{\text{ord}(\rho)}$, we have that the function $L\circ J$ is the one we were looking for.
\end{proof}

\section{Main Results}\label{main}

We divide this section into three subsections. We study first what happens when the relation $\sim_{\mathcal{A}}$ is finite and obtain a negative lineability result (see Theorem \ref{nonlineablereal} below). Then we move to the case when $\sim_{\mathcal{A}}$ is infinite. Finally, in the last subsection, we prove that $\mathcal{C}(\partial D,\C)\setminus\overline{\textup{Pol}(\partial D)}$ contains an isometric copy of $\text{Hol}(\partial D)$, where $\textup{Pol}(\partial D)$ denotes the set of all complex polynomials in one variable restricted to the boundary $\partial D$ of the unit disc.

\subsection{On the finite case (a negative lineability result)} We start by proving that $\mathcal{C}(L,\K)\setminus\overline{\mathcal{A}}$ is $\textup{ord}(\sim_{\mathcal{A}})$-lineable but not $(\textup{ord}(\sim_{\mathcal{A}})+1)$-lineable by assuming that $\sim_{\mathcal{A}}$ is finite and that $\mathcal{A}\subseteq\mathcal{C}(L,\K)$ is a self-adjoint algebra over $\K$ that vanishes nowhere on $L$. More precisely, we have the following result. 

\begin{theorem}\label{nonlineablereal}
Let $\mathcal{A}\subseteq\mathcal{C}(L,\K)$ be a self-adjoint algebra over $\K$ that vanishes nowhere on $L$ and such that $\sim_{\mathcal{A}}$ is finite. 
Then $\mathcal{C}(L,\K)\setminus\overline{\mathcal{A}}$ is $\textup{ord}(\sim_{\mathcal{A}})$-lineable but not $(\textup{ord}(\sim_{\mathcal{A}})+1)$-lineable.
\end{theorem}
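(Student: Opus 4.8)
The plan is to establish the two halves of the statement separately, since the upper and lower bounds on lineability come from genuinely different arguments. By the reduction already recorded after Lemma~\ref{lemrealequi}, we have $\mathcal{C}(L,\K)\setminus\overline{\mathcal{A}}=\mathcal{C}(L,\K)\setminus\mathcal{C}_{\sim_{\mathcal{A}}}(L,\K)$, and by the discussion in Section~\ref{lineabilitynotation} the $\kappa$-lineability of this set is equivalent to $\kappa\leq\operatorname{codim}(\mathcal{C}_{\sim_{\mathcal{A}}}(L,\K))=\dim(\mathcal{C}_L(\sim_{\mathcal{A}},\K))$. So the entire theorem reduces to proving the single cardinal (in fact, integer) identity $\dim(\mathcal{C}_L(\sim_{\mathcal{A}},\K))=\operatorname{ord}(\sim_{\mathcal{A}})$ under the hypothesis that $\sim_{\mathcal{A}}$ is finite. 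Writing $\rho:=\sim_{\mathcal{A}}$ and $n:=\operatorname{ord}(\rho)$, I would state this as a lemma and then read off both conclusions: $n\leq n$ gives $n$-lineability, and the failure of $(n+1)\leq n$ gives the non-$(n+1)$-lineability.

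The upper bound $\dim(\mathcal{C}_L(\rho,\K))\leq n$ is already handed to us for free: Lemma~\ref{orderinjection} produces a linear injection of $\mathcal{C}_L(\rho,\K)$ into $\K^{\operatorname{ord}(\rho)}=\K^{n}$, and since $n$ is finite here, $\dim(\mathcal{C}_L(\rho,\K))\leq\dim(\K^{n})=n$. This immediately yields the negative half of the theorem, namely that $\mathcal{C}(L,\K)\setminus\overline{\mathcal{A}}$ is not $(n+1)$-lineable, because $n+1>n\geq\operatorname{codim}(\mathcal{C}_{\rho}(L,\K))$. So essentially no new work is needed for the ``not $(n+1)$-lineable'' assertion beyond invoking the earlier lemma.

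The substantive part is the matching lower bound $\dim(\mathcal{C}_L(\rho,\K))\geq n$, which forces the $n$-lineability. Here I would use the finiteness of $\rho$ concretely: because $\rho$ is finite, only finitely many classes are nontrivial, say with representatives $r_1,\dots,r_m$ and classes $[r_j]=\{r_j,\,z_{j,1},\dots,z_{j,k_j}\}$, and $n=\sum_j k_j$. Each nonrepresentative point $z=z_{j,i}$ is an isolated point of $\rho$ in the sense that (using compact Hausdorffness of $L$ and finiteness of the nontrivial classes) one can separate the finitely many distinguished points $r_1,\dots,r_m,z_{1,1},\dots$ by disjoint open neighborhoods and then, via Urysohn, build a continuous $f_z\in\mathcal{C}(L,\K)$ with $f_z(z)=1$, $f_z(\xi_R([z]))=0$, and $f_z$ vanishing at the representative of every other nontrivial class. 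The image $\Delta_\rho(f_z)\in\mathcal{C}_L(\rho,\K)$ then has $J(\Delta_\rho(f_z))$ (the map $J$ of Lemma~\ref{orderinjection}) equal, up to the identification $\K^{L\setminus R}\to\K^{n}$, to a standard basis vector at the coordinate $z$. Since $J$ is injective and the vectors $\{J(\Delta_\rho(f_z))\}_{z\in L\setminus R}$ are the $n$ standard basis vectors of $\K^{n}$, they are linearly independent, whence the $\Delta_\rho(f_z)$ are $n$ linearly independent elements of $\mathcal{C}_L(\rho,\K)$, giving $\dim(\mathcal{C}_L(\rho,\K))\geq n$. Combined with the upper bound, this proves $\dim(\mathcal{C}_L(\rho,\K))=n$ and hence $n$-lineability.

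The main obstacle I anticipate is the construction of the separating functions $f_z$ cleanly, i.e.\ verifying that one can realize \emph{every} standard basis vector of $\K^{L\setminus R}$ in the image of $J\circ\Delta_\rho$. The finiteness of the set of nontrivial classes is exactly what makes this work: the distinguished points form a finite subset of the Hausdorff space $L$, so they can be simultaneously separated by disjoint opens and hit by a single Urysohn function with prescribed values at each, and all the infinitely many trivial singleton classes contribute nothing to $\operatorname{ord}(\rho)$ and impose no constraints (on those the condition $x\rho y\Rightarrow f(x)=f(y)$ is vacuous beyond the diagonal). Care is only needed to ensure $f_z$ respects the other identifications forced by $\rho$ (so that $\Delta_\rho(f_z)$ is well-defined on $\rho$ and lands where expected), but since we assign $f_z$ a single common value on each nontrivial class it meets, this is automatic. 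If desired, one can bypass the explicit Urysohn construction altogether by noting that $\mathcal{C}_\rho(L,\K)\cong\mathcal{C}(L/\rho,\K)$ and that the quotient $L/\rho$ differs from $L$ by finitely many finite identifications, so that $\mathcal{C}_L(\rho,\K)\cong\mathcal{C}(L,\K)/\mathcal{C}_\rho(L,\K)$ has dimension exactly equal to the number of identifications $n$; but I would prefer the direct construction since it makes the independence transparent via the already-established injectivity of $J$.
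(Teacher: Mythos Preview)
Your proposal is correct and follows essentially the same approach as the paper: both invoke Lemma~\ref{orderinjection} for the upper bound and construct Urysohn functions separating the finitely many distinguished points of the nontrivial classes for the lower bound. The only difference is cosmetic---you verify independence by pushing the $f_z$ through $\Delta_\rho$ and $J$ to obtain standard basis vectors of $\K^n$, whereas the paper checks directly that $\operatorname{span}\{f_z\}\cap\mathcal{C}_{\sim_{\mathcal A}}(L,\K)=\{0\}$ by evaluation; these are the same computation in different packaging.

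One small imprecision to fix: your stated conditions on $f_z$ (value $1$ at $z$, value $0$ at $\xi_R([z])$ and at representatives of the other nontrivial classes) are not quite enough to make $J(\Delta_\rho(f_z))$ a standard basis vector, since $J(\Delta_\rho(f_z))(x)=f_z(x)-f_z(\xi_R([x]))=f_z(x)$ for $x\in L\setminus R$, and you have not constrained $f_z$ at the \emph{other nonrepresentatives}. You should also require $f_z(z_{j',i'})=0$ for all $z_{j',i'}\neq z$, exactly as the paper does; this is clearly what you intend (you separate \emph{all} distinguished points before invoking Urysohn), so just list the condition. The remark that $\Delta_\rho(f_z)$ needs care to be ``well-defined'' is unnecessary: $\Delta_\rho$ is defined on all of $\mathcal{C}(L,\K)$.
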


\begin{proof}
We know that $\overline{\mathcal A}=\mathcal{C}_{\sim_{\mathcal A}}(L,\K)$ by Lemma~\ref{lemrealequi}. 
To see that it is not $(\textup{ord}(\sim_{\mathcal{A}})+1)$-lineable, note that 
    $$
    \text{codim}(\mathcal{C}_{\sim_{\mathcal{A}}}(L,\K))=\text{dim}(\mathcal{C}_L(\sim_{\mathcal{A}},\R))\leq \text{dim}(\K^{\text{ord}(\sim_{\mathcal{A}})}) = \text{ord}(\sim_{\mathcal{A}})
    $$
by Lemma~\ref{orderinjection} and the fact that $\text{ord}(\sim_{\mathcal A})$ is finite.
So let us now prove that it is indeed $\text{ord}(\sim_{\mathcal A})$-lineable.\\

Consider $R$ any set of representatives of $\sim_{\mathcal{A}}$. 
Then $R\cap\{x\in L:|[x]|>1\}$ is finite.
Let $R\cap\{x\in L:|[x]|>1\}=\{x_1,\ldots,x_m\}$ and, for each $1\leq i\leq m$, take $[x_i]\setminus\{x_i\}=\{x_i^1,\ldots,x_i^{n_i}\}$ which is also finite. 
Now, for each $1\leq j_i\leq n_i$ and $1\leq i\leq m$, we have that
    $$
    \{x_i^{j_i}\}\text{ and }\{x_1,\ldots,x_m\}\cup\{x_k^{\ell_k}:1\leq k\leq m,\ 1\leq \ell_k\leq n_k\}\setminus\{x_i^{j_i}\}
    $$ 
are disjoint closed subsets of $L$ and so, by Urysohn's lemma, there is an $f_i^{j_i}\in\mathcal{C}(L,[0,1])$ (where we identify $\mathcal{C}(L,[0,1])$ with $\{f\in\mathcal{C}(L,\mathbb K):f[L]\subseteq[0,1]\}$) such that
    $$
    f_{i}^{j_i}(x_i^{j_i})=1,\ f_i^{j_i}(x_k^{\ell_k})=0\text{ for every }x_k^{\ell_k}\neq x_i^{j_i}\text{ and }f_i^{j_k}(x_k)=0\text{ for every }1\leq k\leq m.
    $$
Now consider $V:=\textup{span}_{\K}\{f_i^{j_i}:1\leq i\leq m,\ 1\leq j_i\leq n_i\}$. 
Clearly, $\{0\}=\mathcal{C}_{\sim_{\mathcal{A}}}(L,\K)\cap V$ since $\sum\lambda_k^{\ell_k}f_k^{\ell_k}\in\mathcal{C}_{\sim_{\mathcal{A}}}(L,\K)$ implies that $0=\left(\sum \lambda_k^{\ell_k}f_k^{\ell_k}\right)(x_i)=\left(\sum \lambda_k^{\ell_k}f_k^{\ell_k}\right)(x_i^{j_i})=\lambda_i^{j_i}$. 
Also, they are linearly independent since $\sum\lambda_k^{\ell_k}f_k^{\ell_k}=0$ implies $0=\left(\sum \lambda_k^{\ell_k}f_k^{\ell_k}\right)(x_i^{j_i})=\lambda_i^{j_i}$. 
Hence, as $\text{dim}(V)=\sum_{i=1}^m n_i=\text{ord}(\sim_{\mathcal{A}})$, we conclude that $\mathcal{C}(L,\K)\setminus\overline{\mathcal{A}}$ is $\textup{ord}(\sim_{\mathcal{A}})$-lineable.
\end{proof}

\begin{remark}
Note that the degenerate case $\text{ord}(\sim_{\mathcal A})=0$ (i.e., $\sim_{\mathcal A}=D_K$ so that $\mathcal A$ separates points) is also contemplated in the latter proof giving the result $\overline{\mathcal A}=\mathcal{C}(K,\R)$, that is, $\mathcal{C}(K,\R)\setminus\overline{\mathcal A}=\varnothing$, where, clearly, $\varnothing$ is $0$-lineable but not $1$-lineable. 
As it is commonly done, $S=\{s_1,\ldots,s_n\}$ is implied to be the empty set $\varnothing$ whenever $n=0$ as this is just a bijection $s:n\to S$ where $n$ is a natural number under Neumann's set-theoretic definition $0=\varnothing$, $n+1=n\cup\{n\}$.
\end{remark}

As an immediate consequence of Theorem~\ref{nonlineablereal} and \cite[Theorem~2.1]{O}, we obtain the following corollary.
Recall that given a Banach lattice $V$, $M\subset V$ and a cardinal number $\kappa$, we say that $M$ is $\kappa$-latticeable if $M\cup \{0\}$ contains a sublattice $W$ of $V$ of dimension $\kappa$.
We refer the interested reader to \cite{BFMS,BFST,BL,CFST,FST,O2} for more information on latticeability.

\begin{corollary} \label{latticeable}
Let $\mathcal{A}\subseteq\mathcal{C}(L,\R)$ be an algebra over $\R$ that vanishes nowhere on $L$ and such that $\sim_{\mathcal{A}}$ is finite. 
Then $\mathcal{C}(L,\R)\setminus\overline{\mathcal{A}}$ is $\textup{ord}(\sim_{\mathcal{A}})$-latticeable but not $(\textup{ord}(\sim_{\mathcal{A}})+1)$-latticeable.
\end{corollary}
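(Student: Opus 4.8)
The plan is to deduce Corollary~\ref{latticeable} from Theorem~\ref{nonlineablereal} by invoking \cite[Theorem~2.1]{O}, which (as the text advertises) bridges lineability and latticeability in the context of $\mathcal{C}(L,\R)$. Since latticeability is a more restrictive property than lineability---a sublattice of dimension $\kappa$ is in particular a vector subspace of dimension $\kappa$---the non-latticeability half is immediate: if $\mathcal{C}(L,\R)\setminus\overline{\mathcal{A}}$ were $(\textup{ord}(\sim_{\mathcal{A}})+1)$-latticeable, it would a fortiori be $(\textup{ord}(\sim_{\mathcal{A}})+1)$-lineable, contradicting Theorem~\ref{nonlineablereal}. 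So no work is needed there beyond this observation.

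For the positive half, the first step is to re-examine the subspace $V$ constructed in the proof of Theorem~\ref{nonlineablereal} and check that it is in fact a sublattice, or else to produce a sublattice of the same dimension. The functions $f_i^{j_i}$ were obtained from Urysohn's lemma as nonnegative functions into $[0,1]$ that are $1$ at the single point $x_i^{j_i}$ and $0$ at all the other finitely many distinguished points $x_k^{\ell_k}$ and at the representatives $x_k$. I would try to arrange, by a further application of Urysohn's lemma on the finitely many disjoint closed sets involved, that the $f_i^{j_i}$ have \emph{pairwise disjoint supports} (this is possible precisely because $L$ is compact Hausdorff, hence normal, and the relevant points are finite in number and can be separated by disjoint open neighborhoods). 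Once the supports are disjoint, the span $V$ of the $\{f_i^{j_i}\}$ is automatically closed under the lattice operations $\vee$ and $\wedge$, since for disjointly supported functions one has $|\sum_k \lambda_k f_k| = \sum_k |\lambda_k| f_k \in V$ and lattice operations reduce to absolute values via $g\vee h=\tfrac12(g+h+|g-h|)$ and $g\wedge h=\tfrac12(g+h-|g-h|)$; thus $V$ is a genuine sublattice of $\mathcal{C}(L,\R)$.

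It remains to verify that this sublattice still witnesses the desired latticeability, namely that $V\setminus\{0\}\subseteq \mathcal{C}(L,\R)\setminus\overline{\mathcal{A}}$ and that $\dim V=\textup{ord}(\sim_{\mathcal{A}})$. The dimension count is unchanged from Theorem~\ref{nonlineablereal}, since disjointness of supports only strengthens the linear independence already established there. For the inclusion, the same argument as before applies verbatim: any nonzero $\sum \lambda_k^{\ell_k} f_k^{\ell_k}$ fails to lie in $\mathcal{C}_{\sim_{\mathcal{A}}}(L,\R)=\overline{\mathcal{A}}$ because evaluating at $x_i$ (a representative) gives $0$ while evaluating at the related point $x_i^{j_i}$ gives $\lambda_i^{j_i}\neq 0$, so the function separates two $\sim_{\mathcal{A}}$-equivalent points and hence is not in $\overline{\mathcal{A}}$. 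Here I am using that the hypothesis in Corollary~\ref{latticeable} only asks $\mathcal{A}$ to be an algebra vanishing nowhere (not self-adjoint), but in the real case self-adjointness is automatic since $\overline{f}=f$, so Lemma~\ref{lemrealequi} still gives $\overline{\mathcal{A}}=\mathcal{C}_{\sim_{\mathcal{A}}}(L,\R)$.

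The main obstacle I anticipate is the disjoint-support construction: one must check that the finitely many prescribed values can be realized simultaneously by nonnegative Urysohn functions with pairwise disjoint supports, which requires first separating the finite set of distinguished points by pairwise disjoint open sets (using normality and the Hausdorff property) and then building each bump function inside its own neighborhood. If, instead of rebuilding the functions, one prefers to cite \cite[Theorem~2.1]{O} as a black box that upgrades $\textup{ord}(\sim_{\mathcal{A}})$-lineability directly to $\textup{ord}(\sim_{\mathcal{A}})$-latticeability for subsets of $\mathcal{C}(L,\R)$, then the entire positive half collapses to a single invocation of that theorem together with Theorem~\ref{nonlineablereal}, and the only point requiring care is confirming that the hypotheses of \cite[Theorem~2.1]{O} are met by the set $\mathcal{C}(L,\R)\setminus\overline{\mathcal{A}}$.
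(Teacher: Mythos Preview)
Your proposal is correct. The paper itself takes the shortest route: it simply invokes Theorem~\ref{nonlineablereal} together with \cite[Theorem~2.1]{O} as a black box (precisely your option at the end), with no further argument written out. Your primary approach---rebuilding the Urysohn functions $f_i^{j_i}$ so that they have pairwise disjoint supports, whence their span is automatically a sublattice---is a valid and more self-contained alternative that avoids relying on the external citation, at the cost of a few extra lines of construction; your observation that real-valued algebras are automatically self-adjoint (so Lemma~\ref{lemrealequi} applies) is also a point the paper leaves implicit.
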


\subsection{On the infinite case} In this section, we focus on studying the lineability properties in the failure of the Stone-Weierstrass theorem in the infinite case, that is, the lineability properties of $\mathcal C(L,\K)\setminus \overline{\mathcal A}$ whenever $\sim_{\mathcal A}$ is infinite. We have the following result. 

\begin{proposition}\label{linnumber}
Let $\mathcal{A}\subseteq\mathcal{C}(L,\K)$ be a self-adjoint algebra over $\K$ that vanishes nowhere on $L$ and such that $\sim_{\mathcal{A}}$ is infinite. 
Then $\mathfrak{c} \leq \textup{codim}(\overline{\mathcal{A}}) \leq 2^{\textup{ord}(\sim_{\mathcal{A}})}$.
In particular, $\mathcal{C}(L,\K) \setminus \overline{\mathcal{A}}$ is $\mathfrak{c}$-lineable but not $(2^{\textup{ord}(\sim_{\mathcal{A}})})^+$-lineable. 
\end{proposition}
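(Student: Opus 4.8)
The plan is to establish the two inequalities $\mathfrak{c}\leq\textup{codim}(\overline{\mathcal{A}})\leq 2^{\textup{ord}(\sim_{\mathcal{A}})}$ separately, since the lineability consequences then follow immediately from the general facts recalled in the preliminaries: $\mathcal{C}(L,\K)\setminus\mathcal{C}_\rho(L,\K)$ is $\kappa$-lineable if and only if $\kappa\leq\textup{codim}(\mathcal{C}_\rho(L,\K))$, together with the identification $\overline{\mathcal{A}}=\mathcal{C}_{\sim_{\mathcal{A}}}(L,\K)$ from Lemma~\ref{lemrealequi} and the equality $\textup{codim}(\mathcal{C}_\rho(L,\K))=\dim(\mathcal{C}_L(\rho,\K))$. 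The upper bound $\textup{codim}(\overline{\mathcal{A}})\leq 2^{\textup{ord}(\sim_{\mathcal{A}})}$ is the easy half: by Lemma~\ref{orderinjection} there is a linear injection from $\mathcal{C}_L(\sim_{\mathcal{A}},\K)$ into $\K^{\textup{ord}(\sim_{\mathcal{A}})}$, so $\dim(\mathcal{C}_L(\sim_{\mathcal{A}},\K))\leq\dim(\K^{\textup{ord}(\sim_{\mathcal{A}})})$. Since $\textup{ord}(\sim_{\mathcal{A}})$ is infinite, the algebraic dimension of $\K^{\textup{ord}(\sim_{\mathcal{A}})}$ equals its cardinality, namely $|\K|^{\textup{ord}(\sim_{\mathcal{A}})}=2^{\textup{ord}(\sim_{\mathcal{A}})}$ (using $|\K|\leq\mathfrak{c}\leq 2^{\textup{ord}(\sim_{\mathcal{A}})}$ and standard cardinal arithmetic), which gives the stated bound.

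The substantive half is the lower bound $\mathfrak{c}\leq\textup{codim}(\overline{\mathcal{A}})$, equivalently $\mathfrak{c}\leq\dim(\mathcal{C}_L(\sim_{\mathcal{A}},\K))$. Since $\sim_{\mathcal{A}}$ is infinite, its order is infinite, so by the formula \eqref{suboptimarorder} either some equivalence class is infinite, or there are infinitely many nontrivial classes. First I would isolate a countable collection of distinct pairs $(a_n,b_n)\in\,\sim_{\mathcal{A}}$ with $a_n\neq b_n$ that are ``independent'' in the sense that the points involved can be topologically separated: concretely, I would extract a sequence of points $p_1,p_2,\dots$ in $L$ together with distinct targets so that, after passing to a subsequence, I obtain a countable family of pairs lying in $\sim_{\mathcal{A}}$ whose first coordinates form a discrete (hence $C^*$-embedded after closing up, via normality of the compact Hausdorff $L$) sequence. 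The goal is to build, for each bounded scalar sequence, an element of $\mathcal{C}_L(\sim_{\mathcal{A}},\K)$ realizing prescribed values on these pairs.

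The key construction is then to exhibit $\mathfrak{c}$-many linearly independent elements of $\mathcal{C}_L(\sim_{\mathcal{A}},\K)=\textup{Im}(\Delta_{\sim_{\mathcal{A}}})$. For a bounded scalar sequence $\lambda=(\lambda_n)$, I would use Urysohn's lemma and normality to produce $f_\lambda\in\mathcal{C}(L,\K)$ with $f_\lambda(a_n)-f_\lambda(b_n)=\lambda_n$ for all $n$ (this is where the separation of the chosen points is essential), so that $\Delta_{\sim_{\mathcal{A}}}(f_\lambda)$ records $\lambda$ on the pairs $(a_n,b_n)\in\,\sim_{\mathcal{A}}$. Choosing a family of $\mathfrak{c}$-many sequences $\lambda$ that remain linearly independent even after restriction to these countably many coordinates—for instance the sequences $(t^n)_n$ for distinct $t\in(0,1)$, which give a Vandermonde-type independence—yields $\mathfrak{c}$-many functions $\Delta_{\sim_{\mathcal{A}}}(f_\lambda)$ whose images in $\mathcal{C}_L(\sim_{\mathcal{A}},\K)$ are linearly independent, since the evaluation-on-pairs map is linear and injective on their span. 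I expect the main obstacle to be the topological extraction step: guaranteeing that the chosen representatives $a_n,b_n$ can be simultaneously separated by a single continuous function with the prescribed finite differences, which must be handled slightly differently according to whether the infinitude of $\textup{ord}(\sim_{\mathcal{A}})$ comes from a single infinite class or from infinitely many classes, and in the former case one must ensure the selected points accumulate in a controllable way so that Tietze/Urysohn extensions with bounded prescribed values exist.
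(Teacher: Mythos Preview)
Your upper bound argument is correct and matches the paper's exactly. For the lower bound, however, you are working much harder than necessary and have left a genuine gap at precisely the point you flag as ``the main obstacle''.

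The paper's route avoids the infinite topological extraction entirely. Recall from the preliminaries that $\mathcal{C}_L(\sim_{\mathcal{A}},\K)\cong\mathcal{C}(L,\K)/\mathcal{C}_{\sim_{\mathcal{A}}}(L,\K)$ is a \emph{Banach space}. Hence, to obtain $\dim(\mathcal{C}_L(\sim_{\mathcal{A}},\K))\geq\mathfrak{c}$, it suffices to show that this space is infinite-dimensional, since every infinite-dimensional Banach space has algebraic dimension at least $\mathfrak{c}$. For that, one only needs $n$ linearly independent elements for each \emph{finite} $n$. The paper picks any set of representatives $R$, notes that $((L\setminus R)\times R)\cap\,\sim_{\mathcal{A}}$ is infinite (it has cardinality $\textup{ord}(\sim_{\mathcal{A}})$), chooses $n$ distinct pairs $z_i=(x_i,y_i)$ there, and uses Urysohn on the \emph{finite} set $\{x_1,\dots,x_n,y_1,\dots,y_n\}$ to produce $f_i\in\mathcal{C}(L,[0,1])$ with $f_i(x_j)=\delta_{ij}$ and $f_i(y_j)=0$. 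Then $\Delta_{\sim_{\mathcal{A}}}(f_j)(z_i)=\delta_{ij}$, giving $n$ independent elements of $\mathcal{C}_L(\sim_{\mathcal{A}},\K)$. No accumulation issues, no case split, no simultaneous prescription of infinitely many values.

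Your approach of directly building $\mathfrak{c}$ many independent elements via a single $f_\lambda$ realising an infinite bounded sequence of prescribed differences is not only more delicate, it can fail as stated: if, say, a single infinite class forces your chosen $a_n$ to accumulate at $b:=b_n$, then continuity imposes $\lambda_n\to 0$, so arbitrary bounded sequences are unavailable. Your Vandermonde sequences $(t^n)_{n}$ happen to lie in $c_0$, so the argument can be repaired after passing to a convergent subsequence, but this is exactly the case analysis you left unresolved. The missing idea is simply to exploit the Banach-space structure of $\mathcal{C}_L(\sim_{\mathcal{A}},\K)$ and reduce to a finite Urysohn construction.
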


\begin{proof}
By Lemma~\ref{lemrealequi}, we already have that $\overline{\mathcal A}=\mathcal{C}_{\sim_{\mathcal A}}(L,\K)$. To see that $\textup{codim}(\overline{\mathcal{A}}) \leq 2^{\textup{ord}(\sim_{\mathcal{A}})}$, let us notice that Lemma~\ref{orderinjection} yields that
    \begin{align*}
        \text{codim}(\mathcal{C}_{\sim_{\mathcal{A}}}(L,\K)) & = \text{dim}(\mathcal{C}_L(\sim_{\mathcal{A}},\K)) \leq \text{dim}(\K^{\text{ord}(\sim_{\mathcal A})}) \\
        & \leq |\K^{\text{ord}(\sim_{\mathcal A})}|=\mathfrak{c}^{\text{ord}(\sim_{\mathcal A})}=2^{\aleph_0\text{ord}(\sim_{\mathcal A})}=2^{\text{ord}(\sim_{\mathcal A})}
    \end{align*}
as $\text{ord}(\sim_{\mathcal A})\geq\aleph_0$. To prove the remaining inequality, take any set of representatives $R\subseteq L$ of $\sim_{\mathcal A}$ and $\xi_R$ the associated map of $R$. 
Then, the graph of $\xi_R\circ\pi$ restricted to $L\setminus R$ is $\Gamma(\xi_R\circ\pi|_{L\setminus R})=((L\setminus R) \times R)\cap\sim_{\mathcal A}\subseteq\sim_{\mathcal A}$ with infinite cardinality as $|\Gamma(\xi_R\circ\pi|_{L\setminus R})|=|L\setminus R|=\text{ord}(\sim_{\mathcal A})$. 
Then, for any $n\geq1$, we can find $\{z_1,\ldots,z_n\}\subseteq((L\setminus R)\times R)\cap\sim_{\mathcal A}$. 
Let us denote each $z_i$ by $z_i=(x_i,y_i)$ and note that, since $\{x_1,\ldots,x_n\}\subseteq L\setminus R$ and $\{y_1,\ldots,y_n\}\subseteq R$, we have by Urysohn's lemma that for every $1\leq i\leq n$, there is an $f_i\in\mathcal{C}(L,[0,1])$ (where we identify $\mathcal{C}(L,[0,1])$ with $\{f\in\mathcal{C}(L,\mathbb K):f[L]\subseteq[0,1]\}$) such that
$$
f_i(x_j)=\delta_{ij} \text{ and } f_i(y_j)=0.
$$ 
Then, the functions $\Delta_{\sim_{\mathcal A}}(f_i)\in\mathcal{C}_L(\sim_{\mathcal A},\K)$ are linearly independent since $\sum_j\lambda_j\Delta_{\sim_{\mathcal A}}(f_j)\equiv 0$ implies that $0=\left(\sum\lambda_j\Delta_{\sim_{\mathcal A}}(f_j)\right)(z_i)=\lambda_i$ as 
    $$
    \Delta_{\sim_{\mathcal A}}(f_j)(z_i)=f_j(x_i)-f_j(y_i)=\delta_{ij}-0=\delta_{ij}.
    $$
This means that $\mathcal{C}_L(\sim_{\mathcal A},\K)$ has dimension $\geq n$. 
And, since $n\geq 1$ was arbitrary, it follows that $\mathcal{C}_L(\sim_{\mathcal A},\K)$ is an infinite-dimensional Banach space and so $\mathcal{C}(L,\K)\setminus\overline{\mathcal{A}}$ is $\mathfrak{c}$-lineable.
\end{proof}


\begin{remark}
For infinite $\sim_\mathcal{A}$ we also have the property that, if $\mathcal{C}(L,\K) \setminus \overline{\mathcal{A}}$ is $\kappa$-lineable, then it is also $\kappa^{\aleph_0}$-lineable. 
Indeed, if $\kappa \leq \textup{codim}(\overline{\mathcal{A}})$, then
	$$
	\kappa^{\aleph_0}\leq\textup{codim}(\overline{\mathcal{A}})^{\aleph_0}= \textup{codim}(\overline{\mathcal{A}}),
	$$
where we have used the fact that $\textup{codim}(\overline{\mathcal{A}})$ is the infinite dimension of a Banach space and, for any infinite-dimensional Banach space $V$, we have $|V|^{\aleph_0}=|V|$ (see \cite[Lemma~2]{Kruse}).
\end{remark}

\begin{remark}
    Similarly to Corollary~\ref{latticeable}, assuming that $L\subset \R^n$ for some $n\in \mathbb N$, it yields from Proposition~\ref{linnumber} and \cite[Theorem~1.2]{O2} that $\mathcal{C}(L,\R)\setminus\overline{\mathcal{A}}$ is completely latticeable, where $\mathcal{A}\subseteq\mathcal{C}(L,\R)$ is an algebra over $\R$ that vanishes nowhere on $L$.
    Recall that $M$ a subset of Banach lattice $V$ is completely latticeable if there exists a closed infinite-dimensional sublattice $W$ such that $W\setminus \{0\} \subset M$.
    However, unlike Corollary~\ref{latticeable}, we were unable to provide the largest possible infinite cardinal number $\kappa$ such that there is a closed $\kappa$-dimensional sublattice but not a closed $\kappa^+$-dimensional sublattice inside of $\mathcal{C}(L,\R)\setminus\overline{\mathcal{A}}$, even when $L$ is contained in the finite dimensional space $\mathbb R^n$.
\end{remark}

The upper bound on the codimension of $\overline{\mathcal A}$ in Proposition~\ref{linnumber} can be quite useful in terms of lineability, but the lower bound not so much as $\mathfrak c$ does not depend on $\sim_{\mathcal A}$. 
So let us try to find some better bounds by introducing a new cardinal function. Let us recall the following well-known result.

\begin{lemma}\label{complexification}
Given a topological space $X$, we have that $\textup{dim}_\R(\mathcal{C}(X,\R))=\textup{dim}_\C(\mathcal{C}(X,\C))$.
\end{lemma}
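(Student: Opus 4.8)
The plan is to exhibit a single family of functions that is simultaneously a real Hamel basis of $\mathcal{C}(X,\R)$ and a complex Hamel basis of $\mathcal{C}(X,\C)$. This sidesteps any cardinal arithmetic and treats the finite and infinite cases uniformly. Throughout I regard $\mathcal{C}(X,\R)$ as the space of real-valued functions sitting inside $\mathcal{C}(X,\C)$, so that every element of $\mathcal{C}(X,\R)$ is in particular an element of $\mathcal{C}(X,\C)$.

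First I would fix a real Hamel basis $B=\{f_\alpha\}_{\alpha\in I}$ of the real vector space $\mathcal{C}(X,\R)$, so that $\dim_\R\mathcal{C}(X,\R)=|I|$. The claim to be proved is that this same $B$ is a $\C$-basis of $\mathcal{C}(X,\C)$, from which $\dim_\C\mathcal{C}(X,\C)=|I|=\dim_\R\mathcal{C}(X,\R)$ follows at once. For the spanning part, given $g\in\mathcal{C}(X,\C)$ I would write $g=\re g+i\operatorname{Im}g$, where $\re g,\operatorname{Im}g\in\mathcal{C}(X,\R)$ are continuous real-valued functions; the continuity of the real and imaginary parts is the only topological input the argument requires. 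Expanding each of $\re g$ and $\operatorname{Im}g$ as a finite real-linear combination of elements of $B$ then exhibits $g$ as a finite $\C$-linear combination of $B$, so $B$ spans $\mathcal{C}(X,\C)$ over $\C$.

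For $\C$-linear independence, I would start from a finite relation $\sum_\alpha\lambda_\alpha f_\alpha=0$ with $\lambda_\alpha=a_\alpha+ib_\alpha\in\C$. Since every $f_\alpha$ is real-valued, separating real and imaginary parts of this identity yields $\sum_\alpha a_\alpha f_\alpha=0$ and $\sum_\alpha b_\alpha f_\alpha=0$ in $\mathcal{C}(X,\R)$. The real independence of $B$ then forces $a_\alpha=b_\alpha=0$, hence $\lambda_\alpha=0$ for all $\alpha$, so $B$ is $\C$-linearly independent and the claim is established.

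There is essentially no serious obstacle here; the proof is routine. The one point that must be handled with care is that the argument crucially uses that the basis elements $f_\alpha$ are \emph{real-valued}, so that the real and imaginary parts of a linear relation can be split off coefficientwise. This is precisely why the right move is to start from a basis of $\mathcal{C}(X,\R)$ and show it survives as a basis of $\mathcal{C}(X,\C)$, rather than comparing the two dimensions indirectly (for instance via the identity $\dim_\R V=2\dim_\C V$ for a complex vector space $V$, which would force one to check that finiteness of one dimension matches finiteness of the other and to invoke $2\kappa=\kappa$ for infinite cardinals).
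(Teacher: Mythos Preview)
Your argument is correct and is the standard proof of this well-known fact. Note, however, that the paper does not actually supply a proof of this lemma: it is merely stated as a ``well-known result'' and then used. So there is nothing to compare your approach against; your write-up simply fills in the omitted details in the expected way.
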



With Lemma~\ref{complexification} in mind, we set
\begin{equation*}
    sw(L):=\text{dim}(\mathcal{C}(L,\K))
\end{equation*}
and call it as the {\it Stone-Weierstrass character of $L$}. In the case that $L$ is finite, we have that $sw(L)=|L|$. Otherwise, we have that $sw(L)=|\mathcal{C}(L,\K)|$. With this notion, let us prove first a relation between the topological weight of $L$, that we denote by $w(L)$, and the Stone-Weierstrass character of $L$.

\begin{lemma}\label{bounds}
The following chain of inequalities holds true:
\begin{equation*} 
w(L)\leq sw(L)\leq w(L)^{\aleph_0}.
\end{equation*}
\end{lemma}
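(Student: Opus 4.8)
The plan is to treat the finite and infinite cases separately, since $sw(L)$ behaves quite differently. If $L$ is finite with $|L|=n$, then $\mathcal{C}(L,\K)\cong\K^n$, so $sw(L)=n=w(L)$ and the chain is immediate (for $n\ge 2$ one has $w(L)^{\aleph_0}=n^{\aleph_0}=\co\ge n$, and the cases $n\le 1$ are trivial). From now on I assume $L$ infinite, so that $w(L)\ge\aleph_0$ and, as already observed in the text, $sw(L)=\dim(\mathcal{C}(L,\K))=|\mathcal{C}(L,\K)|$. The strategy is to route the lower bound through the density character, proving $w(L)\le\text{dens}(\mathcal{C}(L,\K))\le sw(L)$, and to obtain the upper bound $sw(L)\le w(L)^{\aleph_0}$ directly from a dense subalgebra of controlled cardinality. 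The cardinal-arithmetic engine throughout will be the identity $|V|=\text{dens}(V)^{\aleph_0}$, valid for every infinite-dimensional Banach space $V$: it follows from $|V|^{\aleph_0}=|V|$ (already cited from \cite{Kruse}) together with the fact that, in a metric space, each point is a sequential limit of a dense set, so $|V|\le\text{dens}(V)^{\aleph_0}$.

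For the upper bound, I would first produce a dense subalgebra of cardinality $w(L)$. Fixing a base of size $w(L)$ and using the normality of $L$ together with Urysohn's lemma, one chooses for each suitable pair of basic open sets a continuous function separating them, yielding a family $\mathcal{F}\subseteq\mathcal{C}(L,\R)$ with $|\mathcal{F}|=w(L)$ that separates the points of $L$ (equivalently, one embeds $L$ into the Tychonoff cube $[0,1]^{w(L)}$ and takes the coordinate projections). Let $A$ be the unital algebra generated by $\mathcal{F}$ over the countable subfield $\Q$ (resp.\ $\Q+i\Q$ in the complex case); then $|A|=w(L)\cdot\aleph_0=w(L)$, and $A$ is self-adjoint because its generators are real-valued. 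By the Stone--Weierstrass theorem (in its self-adjoint form for $\K=\C$), $\overline{A}=\mathcal{C}(L,\K)$. Since every element of $\mathcal{C}(L,\K)=\overline{A}$ is a uniform limit of a sequence from $A$, this gives $sw(L)=|\mathcal{C}(L,\K)|\le|A|^{\aleph_0}=w(L)^{\aleph_0}$.

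For the lower bound, I would combine two elementary estimates. Starting from a Hamel basis of $\mathcal{C}(L,\K)$, the set of all finite $(\Q+i\Q)$-linear combinations of basis vectors is dense and has cardinality equal to $\dim(\mathcal{C}(L,\K))=sw(L)$ (the space being infinite-dimensional), so $\text{dens}(\mathcal{C}(L,\K))\le sw(L)$. Conversely, given any dense $D\subseteq\mathcal{C}(L,\K)$, since $\mathcal{C}(L,\K)$ separates points of the compact Hausdorff space $L$ and density lets one approximate separating functions, the evaluation map $x\mapsto(g(x))_{g\in D}$ is an injection of $L$ into $\K^{D}$; being continuous from a compact space into a Hausdorff one, it is an embedding, so $\{g^{-1}(B):g\in D,\ B\ \text{a rational ball of }\K\}$ is a subbase for the topology of $L$. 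This family has cardinality $|D|\cdot\aleph_0=|D|$, whence $w(L)\le\text{dens}(\mathcal{C}(L,\K))$. Combining the two estimates yields $w(L)\le\text{dens}(\mathcal{C}(L,\K))\le sw(L)$.

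The step I expect to be the main obstacle is the upper bound, specifically the construction of a dense subalgebra whose cardinality is controlled by $w(L)$ rather than by $w(L)^{\aleph_0}$: this is where the embedding into the cube (or the equivalent Urysohn-function argument) and the self-adjoint version of Stone--Weierstrass in the complex case must be handled with care, after which the metric-space estimate $|\overline{A}|\le|A|^{\aleph_0}$ and the identity from \cite{Kruse} do the remaining work. The verification that the generating family genuinely separates points, and the finite-case bookkeeping, are routine.
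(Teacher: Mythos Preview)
Your proof is correct and follows essentially the same route as the paper: in the infinite case, produce a dense subset of $\mathcal{C}(L,\K)$ of cardinality $w(L)$ and use the sequential-limit argument to get the upper bound, while the lower bound comes from the trivial inequality $\text{dens}(\mathcal{C}(L,\K))\le|\mathcal{C}(L,\K)|$. The only difference is that the paper simply quotes the identity $\text{dens}(\mathcal{C}(L,\K))=w(L)$ as well known and then reads off both inequalities immediately, whereas you reprove that identity from scratch (via the Tychonoff-cube/Stone--Weierstrass construction for one direction and the embedding $L\hookrightarrow\K^{D}$ for the other); your argument is therefore more self-contained but not genuinely different.
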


\begin{proof}
As we have said in the paragraph prior to Lemma~\ref{bounds}, if $L$ is finite, then $sw(L)=|L|=w(L)$ and we are done. Otherwise, it is well-known that $\text{dens}(\mathcal{C}(L,\K))=w(L)$. 
So consider $D\subseteq\mathcal{C}(L,\K)$ to be dense with cardinality $w(L)$. 
Clearly, $sw(L)=|\mathcal{C}(L,\K)|\geq |D|=w(L)$.
Now, since $\mathcal{C}(L,\K)$ is a metric space, we have that for any $f\in \mathcal{C}(L,\K)$, there exists a sequence $(f_n)_{n=1}^\infty \subseteq D$ that converges pointwise to $f$. 
This defines an injection from $\mathcal{C}(L,\K)$ into $D^\N$ that maps $f$ to $(f_n)_{n=1}^\infty$, which means that $|\mathcal{C}(L,\K)|\leq|D|^{\aleph_0}=w(L)^{\aleph_0}$.
\end{proof}

As a consequence of Lemma \ref{bounds}, we can express the Stone-Weiestrass character as follows.

\begin{proposition}
Let $L$ be a compact space, then
	$$
	sw(L)=\begin{cases}|L| & \text{if $L$ is finite}, \\ w(L)^{\aleph_0}& \text{if $L$ is infinite.}\end{cases}.
	$$
\end{proposition}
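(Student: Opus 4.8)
The plan is to split into the finite and infinite cases, disposing of the finite case by the direct computation already recorded in the paragraph preceding Lemma~\ref{bounds}, and reducing the infinite case to a squeeze between the two estimates furnished by that same lemma.

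For finite $L$ the statement is immediate: as noted just before Lemma~\ref{bounds}, a finite compact space yields $sw(L)=|L|$, which is exactly the first branch of the formula, so nothing further is needed there.

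For infinite $L$, Lemma~\ref{bounds} already supplies the upper bound $sw(L)\leq w(L)^{\aleph_0}$, so the only thing left to establish is the reverse inequality $w(L)^{\aleph_0}\leq sw(L)$. The key observation is that for infinite $L$ we have $sw(L)=|\mathcal{C}(L,\K)|$, the cardinality of an infinite-dimensional Banach space; hence, by Kruse's lemma (invoked in the remark following Proposition~\ref{linnumber}), it is closed under countable powers, i.e.\ $sw(L)^{\aleph_0}=sw(L)$. Starting from the left-hand inequality $w(L)\leq sw(L)$ of Lemma~\ref{bounds} and raising both sides to the power $\aleph_0$, I obtain $w(L)^{\aleph_0}\leq sw(L)^{\aleph_0}=sw(L)$, which is precisely the missing bound.

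Combining this with the upper bound from Lemma~\ref{bounds} gives $sw(L)=w(L)^{\aleph_0}$ for infinite $L$, completing the argument. There is no genuine obstacle beyond recognizing that the inequality $w(L)\leq sw(L)$ can be boosted to $w(L)^{\aleph_0}\leq sw(L)$ using the fact that $sw(L)$ absorbs the exponent $\aleph_0$; once Lemma~\ref{bounds} and Kruse's lemma are available, the whole proof is a short cardinal-arithmetic squeeze.
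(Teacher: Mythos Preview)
Your proof is correct and follows essentially the same approach as the paper: both dispatch the finite case trivially, then squeeze $sw(L)$ between $w(L)^{\aleph_0}$ and $sw(L)^{\aleph_0}$ using Lemma~\ref{bounds} together with the fact (Kruse) that the cardinality of an infinite-dimensional Banach space satisfies $\kappa^{\aleph_0}=\kappa$.
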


\begin{proof}
The finite case is clear. For the infinite case, denote $\kappa:=sw(L)=|\mathcal{C}(L,\K)|$. Since $\mathcal{C}(L,\K)$ is an infinite-dimensional Banach space, we have $\kappa^{\aleph_0}=\kappa$. 
Then, the previous inequality immediately implies $\kappa\leq w(L)^{\aleph_0}\leq\kappa^{\aleph_0}=\kappa$, concluding the proof.
\end{proof}

The notion of the Stone-Weierstrass character on itself can provide valuable information regarding lineability properties of $\mathcal{C}(L,\K)\setminus\overline{\mathcal{A}}$ even in the more general sets of the form $\mathcal{C}(L,\K)\setminus\mathcal{C}_\rho(L,\K)$ as shown by the following theorem.

\begin{theorem}\label{thmcharacter}
Let $R$ be a set of representatives of $\rho$ and let $S\subseteq(L\times R)\cap\rho$ be any closed subspace. Then, we have the following exact sequence
$$\mathcal{C}_L(\rho,\K)\xrightarrow{\textup{res}_S^\rho\vert_{\mathcal{C}_L(\rho,\K)}}\mathcal{C}(S,\K)\xrightarrow{\textup{res}_{S\cap D_L}^S}\mathcal{C}(S\cap D_L,\K).$$
In particular, we have
$$\textup{codim}(\mathcal{C}_\rho(L,\K))=\textup{dim}(\textup{Ker}(\textup{res}_S^\rho)\cap\mathcal{C}_L(\rho,\K))+\textup{dim}(\textup{Ker}(\textup{res}_{S\cap D_L}^S)).$$
\end{theorem}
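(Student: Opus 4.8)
The plan is to establish exactness at the middle term $\mathcal{C}(S,\K)$, i.e. that $\textup{Im}(\textup{res}_S^\rho\vert_{\mathcal{C}_L(\rho,\K)})=\textup{Ker}(\textup{res}_{S\cap D_L}^S)$, and then to read off the codimension formula from the rank--nullity theorem together with the identity $\textup{codim}(\mathcal{C}_\rho(L,\K))=\textup{dim}(\mathcal{C}_L(\rho,\K))$ already recorded in the excerpt. The inclusion ``$\textup{Im}\subseteq\textup{Ker}$'' is immediate, since every element of $\mathcal{C}_L(\rho,\K)$ is of the form $\Delta_\rho(f)$ with $\Delta_\rho(f)(x,y)=f(x)-f(y)$, which vanishes on the diagonal and hence on $S\cap D_L$ after restriction to $S$. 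The whole substance lies in the reverse inclusion.

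For the reverse inclusion the key observation is that $S$ is the graph of a continuous map. Because $S\subseteq(L\times R)\cap\rho$ and $R$ meets each $\rho$-class in exactly one point, the second coordinate of any $(x,y)\in S$ is forced to equal $\xi_R([x])$; thus the first projection $\pi_1\colon S\to L$ is injective. As $S$ is compact and $L$ is Hausdorff, $\pi_1$ is a homeomorphism onto the compact set $P:=\pi_1(S)\subseteq L$, and $\sigma:=\pi_2\circ\pi_1^{-1}\colon P\to R$ is continuous with $\sigma(x)=\xi_R([x])$; in particular $\sigma(P)=\pi_2(S)$ is compact, hence closed. Transporting a given $h\in\textup{Ker}(\textup{res}_{S\cap D_L}^S)$ along $\pi_1$ produces $\tilde h\in\mathcal{C}(P,\K)$ defined by $\tilde h(x):=h(x,\sigma(x))$, and the hypothesis $h\vert_{S\cap D_L}=0$ becomes the statement that $\tilde h(x)=0$ whenever $\sigma(x)=x$.

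I would then construct the desired preimage $f$ by prescribing it on the closed set $P\cup\sigma(P)$, setting $f:=\tilde h$ on $P$ and $f:=0$ on $\sigma(P)$. The one point to verify is consistency on the overlap: if $x\in P\cap\sigma(P)$ then $x\in R$, so $\sigma(x)=x$ and hence $\tilde h(x)=0$, which matches the value $0$ prescribed on $\sigma(P)$. The pasting lemma (both $P$ and $\sigma(P)$ are closed) then yields a continuous function on $P\cup\sigma(P)$, and Tietze's extension theorem extends it to $f\in\mathcal{C}(L,\K)$. By construction $f(x)-f(\sigma(x))=\tilde h(x)$ for every $x\in P$, that is $\Delta_\rho(f)\vert_S=h$, so $h$ lies in the image. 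I expect this to be the main obstacle of the argument: recognizing $S$ as a graph and checking the overlap consistency is exactly the place where both the defining property of the set of representatives $R$ and the vanishing of $h$ on $S\cap D_L$ are genuinely used.

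Finally, writing $\phi:=\textup{res}_S^\rho\vert_{\mathcal{C}_L(\rho,\K)}$ and $\psi:=\textup{res}_{S\cap D_L}^S$, exactness gives $\textup{Im}(\phi)=\textup{Ker}(\psi)=\textup{Ker}(\textup{res}_{S\cap D_L}^S)$, while $\textup{Ker}(\phi)=\textup{Ker}(\textup{res}_S^\rho)\cap\mathcal{C}_L(\rho,\K)$. Applying the rank--nullity identity $\textup{dim}(\mathcal{C}_L(\rho,\K))=\textup{dim}(\textup{Ker}(\phi))+\textup{dim}(\textup{Im}(\phi))$, which holds as cardinal arithmetic for arbitrary vector spaces, and substituting $\textup{codim}(\mathcal{C}_\rho(L,\K))=\textup{dim}(\mathcal{C}_L(\rho,\K))$, yields the stated formula.
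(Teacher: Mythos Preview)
Your argument is correct and essentially identical to the paper's: your $P$, $\sigma(P)$, $\sigma$, and $\tilde h$ are precisely the paper's $S_X$, $S_Y$, $\xi_R\circ\pi\vert_{S_X}$, and $g\vert_{S_X}$, and both proofs glue $\tilde h$ with $0$ on $P\cup\sigma(P)$, check consistency on the overlap via $x\in R\Rightarrow\sigma(x)=x$, and apply Tietze. The only cosmetic difference is that you obtain continuity of $\sigma$ from the compact--Hausdorff homeomorphism $\pi_1\colon S\to P$, whereas the paper invokes the closed-graph argument directly; the subsequent rank--nullity step is the same.
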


\begin{proof}
We just have to prove that $\text{res}_S^\rho[\mathcal{C}_L(\rho,\K)]=\{f\in\mathcal{C}(S,\K):f[S\cap D_L]=0\}$ by double inclusion. First, for every $f\in\mathcal{C}_L(\rho,\K)$, we have $f[D_L]=0$ and so $f\vert_S[S\cap D_L]=0$. To see the other inclusion, let $f\in\mathcal{C}(S,\K)$ be such that $f[S\cap D_L]=0$. 
Now note that 
\begin{equation*} 
    S_X:=\{x\in L\ : (x,y)\in S \text{ for some } y \in L\}\subseteq L
\end{equation*} 
is closed since the projection map $(x,y)\mapsto x$ is closed by compactness. 
Now, if we consider the associated map $\xi_R$, we have that $S\subseteq (L\times R)\cap\rho=\Gamma(\xi_R\circ\pi|_L)$ implies $S=\Gamma(\xi_R\circ\pi|_{S_X})$. 
And, since $S=\Gamma(\xi_R\circ\pi|_{S_X})$ is closed and 
\begin{equation*} 
    (\xi_R\circ\pi)[S_X]=S_Y:=\{y\in L\ : (x,y)\in S \text{ for some } x \in L \}
\end{equation*} 
is compact and Hausdorff, it follows that $\xi_R\circ\pi|_{S_X}:S_X\to S_Y$ is continuous. 
Now consider $g:S_X\cup S_Y\to\K$ defined as
\begin{equation*} 
    g(x)=
    \begin{cases}
    f(x,\xi_R([x])) & \text{if } x\in S_X,\\ 
    0 & \text{if }x\in S_Y,
    \end{cases}
\end{equation*}
which is well-defined since given $x\in S_X\cap S_Y$ we have that $ x\in R$ and, therefore, $f(x,\xi_R([x]))=f(x,x)=0$. 
As $g\vert_{S_X}$ and $g\vert_{S_Y}$ are continuous, $g\in\mathcal{C}(S_X\cup S_Y,\K)$ where $S_X\cup S_Y\subseteq L$ is compact and Hausdorff. Then, by Tietze's extension theorem, there exists $\overline{g} \in\mathcal{C}(L,\K)$ such that $\overline{g} \vert_{S_x\cup S_y}=g$. So, if we now consider $\overline{f}:=\Delta_\rho(\overline{g})\in\mathcal{C}_L(\rho,\K)$, we have that
\begin{equation*} 
    \overline{f}\vert_{S}(x,y)=\overline{g}\vert_{S_X}(x)-\overline{g}\vert_{S_Y}(y)=g\vert_{S_X}(x)-0=f(x,\xi_R([x]))=f(x,y)
\end{equation*} 
for every $(x,y)\in S$ and so $f=\overline{f}\vert_S\in\text{res}_S^\rho[\mathcal{C}_L(\rho,\K)]$ as needed. For the last part, note that by kernel-image dimensionality we have $\text{codim}(\mathcal{C}_\rho(L,\K))=\text{dim}(\mathcal{C}_L(\rho,\K))$ with
\begin{align*}
\text{dim}(\mathcal{C}_L(\rho,\K))&=\text{dim}(\text{Ker}(\text{res}_S^\rho\vert_{\mathcal{C}_L(\rho,\K)}))+\text{dim}(\text{Im}(\text{res}_S^\rho\vert_{\mathcal{C}_L(\rho,\K)}))\\
&= \text{dim}(\text{Ker}(\text{res}_S^\rho)\cap\mathcal{C}_L(\rho,\K))+\text{dim}(\text{Ker}(\text{res}_{S\cap D_L}^S))
\end{align*}
\end{proof}

\begin{proposition} \label{prop-sw(S)}
Let $R$ be a set of representatives of $\rho$ such that there is an infinite closed subset $S\subseteq(L\times R)\cap\rho$ with $sw(S)>sw(S\cap D_L)$. 
Then, denoting
$$\kappa_0:=|\{f\in\mathcal{C}_L(\rho,\K):f\vert_{S}\equiv0\}|$$
we have that $\mathcal{C}(L,\K)\setminus\mathcal{C}_\rho(L,\K)$ is $(sw(S)+\kappa_0)$-lineable but not $(sw(S)+\kappa_0)^+$-lineable. 
\end{proposition}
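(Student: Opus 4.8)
The plan is to compute $\textup{codim}(\mathcal{C}_\rho(L,\K))=\textup{dim}(\mathcal{C}_L(\rho,\K))$ exactly and to verify that it equals $sw(S)+\kappa_0$; the two assertions then follow immediately from the equivalence recalled in Section~\ref{lineabilitynotation}, namely that $\mathcal{C}(L,\K)\setminus\mathcal{C}_\rho(L,\K)$ is $\kappa$-lineable if and only if $\kappa\leq\textup{codim}(\mathcal{C}_\rho(L,\K))$, so that $(sw(S)+\kappa_0)$-lineability and the failure of $(sw(S)+\kappa_0)^+$-lineability together amount to $\textup{codim}(\mathcal{C}_\rho(L,\K))=sw(S)+\kappa_0$. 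The entry point is Theorem~\ref{thmcharacter} applied to the given $R$ and $S$, which gives
$$\textup{codim}(\mathcal{C}_\rho(L,\K))=\textup{dim}\bigl(\textup{Ker}(\textup{res}_S^\rho)\cap\mathcal{C}_L(\rho,\K)\bigr)+\textup{dim}\bigl(\textup{Ker}(\textup{res}_{S\cap D_L}^S)\bigr).$$
I would then note that the first summand is precisely $\textup{dim}(V_0)$, where $V_0:=\{f\in\mathcal{C}_L(\rho,\K):f\vert_S\equiv0\}$ is the subspace whose cardinality is the given $\kappa_0$.

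Next I would evaluate the second summand. Since $\textup{res}_{S\cap D_L}^S$ is surjective by Tietze's theorem, the short exact sequence $0\to\textup{Ker}(\textup{res}_{S\cap D_L}^S)\to\mathcal{C}(S,\K)\to\mathcal{C}(S\cap D_L,\K)\to0$ yields $sw(S)=\textup{dim}(\textup{Ker}(\textup{res}_{S\cap D_L}^S))+sw(S\cap D_L)$. As $S$ is infinite, $sw(S)$ is an infinite cardinal, so infinite cardinal arithmetic rewrites this as $sw(S)=\max(\textup{dim}(\textup{Ker}(\textup{res}_{S\cap D_L}^S)),\,sw(S\cap D_L))$; the hypothesis $sw(S)>sw(S\cap D_L)$ then forces $\textup{dim}(\textup{Ker}(\textup{res}_{S\cap D_L}^S))=sw(S)$.

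It then remains to combine the two summands, that is, to show $\textup{dim}(V_0)+sw(S)=sw(S)+\kappa_0$. For this I would isolate the general fact that, for any $\K$-vector space $W$ and any cardinal $\mu\geq\co$, one has $\textup{dim}(W)+\mu=|W|+\mu$: indeed $\textup{dim}(W)\leq|W|\leq\max(\textup{dim}(W),\co)\leq\textup{dim}(W)+\mu$ (the middle inequality holding because $|\K|=\co$), and adding $\mu$ to both the first and the last inequality, using $\mu+\mu=\mu$, gives the two opposite estimates. Applying this with $W=V_0$ and $\mu=sw(S)\geq\co$ (valid since $S$ is infinite) yields $\textup{dim}(V_0)+sw(S)=|V_0|+sw(S)=\kappa_0+sw(S)$. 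Hence $\textup{codim}(\mathcal{C}_\rho(L,\K))=\textup{dim}(V_0)+sw(S)=sw(S)+\kappa_0$, which is exactly what is needed.

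The main obstacle I anticipate is the bookkeeping between a dimension and a cardinality: $\kappa_0$ is defined as $|V_0|$, whereas Theorem~\ref{thmcharacter} naturally produces $\textup{dim}(V_0)$, and these genuinely differ in the degenerate cases (for instance $\kappa_0=\co$ while $\textup{dim}(V_0)$ is finite whenever $V_0$ is finite-dimensional but nonzero). The key point is that this discrepancy disappears once the infinite cardinal $sw(S)\geq\co$ is added, which is exactly what the general fact above makes precise; carefully covering the degenerate cases ($V_0=\{0\}$, $V_0$ finite-dimensional, and $S\cap D_L$ finite, where $sw(S\cap D_L)=|S\cap D_L|$) is the only delicate part of the argument.
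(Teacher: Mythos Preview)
Your proposal is correct and follows essentially the same route as the paper: apply Theorem~\ref{thmcharacter}, use surjectivity of $\textup{res}_{S\cap D_L}^S$ together with $sw(S)>sw(S\cap D_L)$ to identify the second summand with $sw(S)$, and then reconcile the first summand $\dim(V_0)$ with the cardinality $\kappa_0=|V_0|$ after adding the infinite cardinal $sw(S)\geq\co$. Your general fact $\dim(W)+\mu=|W|+\mu$ for $\mu\geq\co$ is in fact a cleaner formulation than the paper's line $\kappa_0=\dim(V_0)+\co$ (which is literally false when $V_0=\{0\}$, though harmless once $sw(S)$ is added), so your handling of the degenerate cases is slightly more careful.
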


\begin{proof}
We just need to prove that $\text{codim}(\mathcal{C}_\rho(L,\K))=sw(S)+\kappa_0$. To do this, note that considering the linear surjection $\text{res}_{S\cap D_L}^S:\mathcal{C}(S,\K)\to\mathcal{C}(S\cap D_L,\K)$, we have
$$\underbrace{\text{dim}(\mathcal{C}(S,\K))}_{sw(S)}=\underbrace{\text{dim}(\mathcal{C}(S\cap D_L,\K))}_{sw(S\cap D_L)}+\text{dim}(\text{Ker}(\text{res}_{S\cap D_L}^S))=\text{dim}(\text{Ker}(\text{res}_{S\cap D_L}^S))$$
by kernel-image dimensionality and the fact that $sw(S)>sw(S\cap D_L)$ where $sw(S)$ is infinite. Hence, as $\text{dim}(\text{Ker}(\text{res}_{S\cap D_L}^S))=sw(S)\geq\mathfrak c$ and $\kappa_0=\textup{dim}(\textup{Ker}(\textup{res}_S^\rho)\cap\mathcal{C}_L(\rho,\K))+\mathfrak{c}$, it follows by the Theorem~\ref{thmcharacter} that
\begin{align*}
\text{codim}(\mathcal{C}_\rho(L,\K))&=sw(S)+\textup{dim}(\textup{Ker}(\textup{res}_S^\rho)\cap\mathcal{C}_L(\rho,\K))\\
&=sw(S)+\textup{dim}(\textup{Ker}(\textup{res}_S^\rho)\cap\mathcal{C}_L(\rho,\K))+\mathfrak{c}\\
&=sw(S)+\kappa_0
\end{align*} 
The last part follows immediately from $(sw(S)+\kappa_0)$-lineability.
\end{proof}

This last proposition immediately implies the following result.

\begin{proposition}\label{supcardinal}
Let $\mathcal{A}\subseteq\mathcal{C}(L,\K)$ be a self-adjoint algebra over $\K$ that vanishes nowhere on $L$. If $\sim_{\mathcal A}$ is infinite, then $\mathcal{C}(L,\K)\setminus\overline{\mathcal A}$ is $(\sup_{x\in L}w([x]))^{\aleph_0}$-lineable.
\end{proposition}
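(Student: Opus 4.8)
The plan is to read the result off Proposition~\ref{prop-sw(S)} by exhibiting, for a well-chosen closed set $S$, the estimate $sw(S)\ge\bigl(\sup_{x\in L}w([x])\bigr)^{\aleph_0}$. Throughout write $\rho=\sim_{\mathcal A}$, so that $\overline{\mathcal A}=\mathcal C_\rho(L,\K)$ by Lemma~\ref{lemrealequi}, and recall that $\mathcal C(L,\K)\setminus\overline{\mathcal A}$ is $\kappa$-lineable exactly when $\kappa\le\operatorname{codim}(\mathcal C_\rho(L,\K))=\dim\mathcal C_L(\rho,\K)$. Set $\lambda:=\sup_{x\in L}w([x])$. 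I would first dispose of the case $\lambda\le\aleph_0$: then $\lambda^{\aleph_0}=\mathfrak c$, and $\mathfrak c$-lineability is already guaranteed by Proposition~\ref{linnumber}. So from now on assume $\lambda$ uncountable.

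Next I would isolate the basic building block. Fix a set of representatives $R$ of $\rho$ with associated map $\xi_R$. For $x\in L$ the ``column'' $S_x:=[x]\times\{\xi_R([x])\}$ is a closed subset of $(L\times R)\cap\rho$, being the product of the closed class $[x]$ with a single point of $R$. The first-coordinate projection identifies $S_x$ homeomorphically with $[x]$, so that $sw(S_x)=sw([x])=w([x])^{\aleph_0}$ when $[x]$ is infinite, while $S_x\cap D_L=\{(\xi_R([x]),\xi_R([x]))\}$ is a single point and hence $sw(S_x\cap D_L)=1<sw(S_x)$. Proposition~\ref{prop-sw(S)} applied to $S_x$ then yields $\operatorname{codim}(\mathcal C_\rho(L,\K))\ge sw(S_x)=w([x])^{\aleph_0}$, and as $x$ is arbitrary this already gives
\[
\operatorname{codim}(\mathcal C_\rho(L,\K))\ \ge\ \sup_{x\in L}w([x])^{\aleph_0}.
\]

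The passage from $\sup_x w([x])^{\aleph_0}$ to $\bigl(\sup_x w([x])\bigr)^{\aleph_0}=\lambda^{\aleph_0}$ is where the cardinal arithmetic enters, and it is clean in two subcases. If the supremum is attained by some class $[x_0]$, the building block for $x_0$ already delivers $\lambda^{\aleph_0}$. If $\operatorname{cf}(\lambda)>\aleph_0$, then every countable subset of $\lambda$ is bounded, so $\lambda^{\aleph_0}=\lambda\cdot\sup_{\mu<\lambda}\mu^{\aleph_0}=\sup_{\mu<\lambda}\mu^{\aleph_0}$; since the values $w([x])$ are cofinal in $\lambda$ and $\mu\mapsto\mu^{\aleph_0}$ is monotone, $\sup_{\mu<\lambda}\mu^{\aleph_0}=\sup_x w([x])^{\aleph_0}$, and the displayed inequality finishes the argument.

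The main obstacle is the remaining subcase, $\operatorname{cf}(\lambda)=\aleph_0$ with the supremum not attained, where $\lambda^{\aleph_0}$ can strictly exceed $\sup_x w([x])^{\aleph_0}$ (for instance $\aleph_\omega^{\aleph_0}>\sup_n\aleph_n^{\aleph_0}$ when $2^{\aleph_0}<\aleph_\omega$). Here a single class no longer suffices, and I would build one closed set $S$ meeting cofinally many classes. Concretely, choose classes $C_n$ with $w(C_n)\nearrow\lambda$, and, after selecting the representatives $r_n\in C_n\cap R$ suitably, arrange that $S_X:=\bigcup_n C_n$ (or a closed modification of it) is closed in $L$ and that $g:=\xi_R\circ\pi$ is continuous on $S_X$; this is essentially the demand that the accumulation of the $r_n$ again land in $R$, as happens in the clopen model where the $C_n$ converge to a class. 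Setting $S:=\Gamma(g|_{S_X})\subseteq(L\times R)\cap\rho$, the projection identifies $S$ with $S_X$, and since each $C_n$ is a subspace of $S_X$ we get $w(S)=w(S_X)\ge\sup_n w(C_n)=\lambda$, whence $sw(S)=w(S)^{\aleph_0}\ge\lambda^{\aleph_0}$; meanwhile $S\cap D_L\cong S_X\cap R$ is small, so $sw(S\cap D_L)<sw(S)$, and Proposition~\ref{prop-sw(S)} closes the argument. The delicate point, which I expect to require the most care, is guaranteeing \emph{simultaneously} the closedness of $S_X$ and the continuity of $g$ on it (so that $S$ is a genuine closed subset of the graph) while keeping $S$ off the diagonal; compactness of $L$ and the freedom in choosing $R$ are the tools I would rely on here.
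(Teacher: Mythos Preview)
Your per-class argument matches the paper's exactly: for each infinite class $[x]$, the column $S_x=[x]\times\{\xi_R([x])\}$ is closed in $(L\times R)\cap\rho$, meets the diagonal in a single point, and Proposition~\ref{prop-sw(S)} (or directly Theorem~\ref{thmcharacter}) gives $\operatorname{codim}(\overline{\mathcal A})\ge sw([x])=w([x])^{\aleph_0}$. Taking the supremum, you and the paper both arrive at $\operatorname{codim}(\overline{\mathcal A})\ge\sup_{x}w([x])^{\aleph_0}$.

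The divergence comes in the passage from $\sup_{x}w([x])^{\aleph_0}$ to $\lambda^{\aleph_0}$. You correctly identify that these can differ when $\operatorname{cf}(\lambda)=\aleph_0$ and the supremum is not attained, and then propose to handle this by constructing a single closed $S$ spanning countably many classes of cofinal weight. This is a genuine gap: you have not shown that such an $S$ exists in general (closedness of $\bigcup_n C_n$ and continuity of $\xi_R\circ\pi$ on it are not automatic, and there is no evident reason why a compact Hausdorff $L$ must admit such a configuration), and you acknowledge as much.

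The paper bypasses this entirely by exploiting a fact you overlook: $\operatorname{codim}(\overline{\mathcal A})=\dim(\mathcal C_L(\rho,\K))$ is the dimension of an infinite-dimensional Banach space, hence satisfies $\kappa^{\aleph_0}=\kappa$ (this is the Remark immediately following Proposition~\ref{linnumber}). Thus from $\operatorname{codim}(\overline{\mathcal A})\ge\sup_{x}w([x])^{\aleph_0}$ one gets for free
\[
\operatorname{codim}(\overline{\mathcal A})=\operatorname{codim}(\overline{\mathcal A})^{\aleph_0}\ \ge\ \Bigl(\sup_{x}w([x])^{\aleph_0}\Bigr)^{\aleph_0},
\]
and the elementary chain $\lambda^{\aleph_0}\le\bigl(\sup_{x}w([x])^{\aleph_0}\bigr)^{\aleph_0}\le(\lambda^{\aleph_0})^{\aleph_0}=\lambda^{\aleph_0}$ finishes the job uniformly, with no case split on cofinality and no topological construction. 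Once you invoke this idempotence, your entire third paragraph becomes unnecessary.
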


\begin{proof}
Consider any set of representatives $R$ of $\sim_\mathcal{A}$ and let $[r]\in L/\sim_{\mathcal A}$ be an arbitrary infinite equivalence class with $r\in R$. If $[r]$ is finite, $w([r])^{\aleph_0}=\mathfrak c$ and so $\mathcal{C}(L,\K)\setminus\overline{\mathcal A}$ is $w([r])^{\aleph_0}$-lineable. Otherwise, if $[r]$ is infinite, since $\sim_\mathcal{A}$ is closed, we know that $[r]\subseteq L$ is closed and so is $[r]\times\{r\}\subseteq(L\times R)\cap\sim_\mathcal{A}$. Moreover, $([r]\times\{r\})\cap\sim_\mathcal{A}=\{(r,r)\}$ and so $sw([r]\times\{r\})\geq\mathfrak{c}>1=sw(([r]\times\{r\})\cap D_L)$. Thus, by the  Theorem~\ref{thmcharacter}, $\mathcal{C}(L,\K)\setminus\overline{\mathcal A}$ is $sw([r])$-lineable where $sw([r])=w([r])^{\aleph_0}$. In turn, $\mathcal{C}(L,\K)\setminus\overline{\mathcal A}$ must be $\sup_{x\in L}(w([x])^{\aleph_0})$-lineable and so, since we have the following chain of cardinal inequalities 
$$\left(\sup_{x\in L}w([x])\right)^{\aleph_0}\leq\left(\sup_{x\in L}(w([x])^{\aleph_0})\right)^{\aleph_0}\leq\left(\left(\sup_{x\in L}w([x])\right)^{\aleph_0}\right)^{\aleph_0}=\left(\sup_{x\in L}w([x])\right)^{\aleph_0}$$
which shows that $\left(\sup_{x\in L}(w([x])^{\aleph_0})\right)^{\aleph_0}=\left(\sup_{x\in L}w([x])\right)^{\aleph_0}$, we can finally conclude that the set $\mathcal{C}(L,\K)\setminus\overline{\mathcal A}$ must also be $\left(\sup_{x\in L}w([x])\right)^{\aleph_0}$-lineable, as desired.
\end{proof}

%
%
%

The thing with Theorem \ref{thmcharacter} is that, although usually it is not too hard to compute the dimensionality of $\text{Ker}(\text{res}_{S\cap D_L})$, the computation of the dimensionality of $\text{Ker}(\text{res}_{S})\cap C_L(\rho,\K)$ may be as hard as computing the dimensionality of $C_L(\rho,\K)$ directly. For this reason, in the general case, Theorem \ref{thmcharacter} is quite useful to find lower bounds of $\kappa$-lineability (positive results) but not so much to find upper bounds of $\kappa$-lineability (negative results).\\

However, if we restrict ourselves to the following particular case, we can obtain from Theorem \ref{thmcharacter} a much straightforward computation. It turns out that computing the codimensionality of $\mathcal{C}_\rho(L,\K)$ reduces to computing the dimensionality of a space of continuous functions that vanish on a closed subset of its domain.  

\vspace{0.2cm}

For the rest of this paper, given $R$ a set of representatives of $\rho$, we will denote
	$$
	L':=\{x\in L:|[x]|>1\}.
	$$

\begin{proposition}\label{prpcont}
Let $R$ be a set of representatives of $\rho$ such that $\xi_R\circ\pi\vert_{\overline{L'}}$ is continuous.
Then, denoting $R':=\overline{L'}\cap R$ and $\kappa_0:=\textup{dim}(\textup{Ker}(\textup{res}_{\overline{R'}}^{\overline{L'}}))$, we have that $\mathcal{C}(L,\K)\setminus\mathcal{C}_\rho(L,\K)$ is $\kappa_0$-lineable but not $\kappa_0^+$-lineable.
\end{proposition}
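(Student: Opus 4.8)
The strategy is to apply Theorem~\ref{thmcharacter} with a carefully chosen closed subspace $S\subseteq(L\times R)\cap\rho$ and then show that, under the continuity hypothesis, the exact sequence degenerates in a way that makes the first summand $\textup{dim}(\textup{Ker}(\textup{res}_S^\rho)\cap\mathcal{C}_L(\rho,\K))$ vanish. The natural candidate is $S:=\Gamma(\xi_R\circ\pi\vert_{\overline{L'}})=(\overline{L'}\times R)\cap\rho$, which is precisely the graph of the map assumed continuous; continuity of $\xi_R\circ\pi\vert_{\overline{L'}}$ together with compactness of $\overline{L'}$ guarantees that this graph is closed, so $S$ is a legitimate choice. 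With this $S$, one computes that $S\cap D_L$ consists of the pairs $(x,x)$ with $x\in\overline{L'}\cap R=R'$, so that $\textup{res}_{S\cap D_L}^S$ corresponds, under the homeomorphism $S\cong\overline{L'}$ given by the graph projection, to the restriction map $\textup{res}_{R'}^{\overline{L'}}:\mathcal{C}(\overline{L'},\K)\to\mathcal{C}(R',\K)$. Hence $\textup{dim}(\textup{Ker}(\textup{res}_{S\cap D_L}^S))=\textup{dim}(\textup{Ker}(\textup{res}_{R'}^{\overline{L'}}))=\kappa_0$, which supplies the second summand.

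\textbf{The key steps.} First I would verify that $S$ is closed and identify it as the graph of the continuous map, giving the homeomorphism $S\xrightarrow{\sim}\overline{L'}$ via $(x,y)\mapsto x$ (its inverse being $x\mapsto(x,\xi_R([x]))$). Second, I would pin down $S\cap D_L$: a pair $(x,\xi_R([x]))$ lies on the diagonal exactly when $x=\xi_R([x])$, i.e.\ when $x\in R$, so $S\cap D_L\cong\overline{L'}\cap R=R'$ under the same homeomorphism, and consequently $\textup{res}_{S\cap D_L}^S$ is intertwined with $\textup{res}_{R'}^{\overline{L'}}$. Third, and this is the crux, I would argue that $\textup{Ker}(\textup{res}_S^\rho)\cap\mathcal{C}_L(\rho,\K)=\{0\}$, so the first summand is zero. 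The point is that a function $f\in\mathcal{C}_L(\rho,\K)$ with $f\vert_S\equiv0$ must vanish on all of $\rho$: for $x\in L\setminus\overline{L'}$ we have $|[x]|=1$, so the only pair in $\rho$ with first coordinate $x$ is the diagonal pair $(x,x)$, on which every element of $\mathcal{C}_L(\rho,\K)$ already vanishes; and for $x\in\overline{L'}$ any pair $(x,y)\in\rho$ can be written as $f(x,y)=f(x,\xi_R([x]))-f(y,\xi_R([y]))$ using $f\in\mathcal{C}_L(\rho,\K)$, exactly as in the proof of Lemma~\ref{orderinjection}, and both terms are values of $f$ on $S$, hence zero. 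Finally, plugging these two computations into the codimension formula of Theorem~\ref{thmcharacter} gives $\textup{codim}(\mathcal{C}_\rho(L,\K))=0+\kappa_0=\kappa_0$, and the $\kappa_0$-lineability together with the sharp non-$\kappa_0^+$-lineability follows from the standard equivalence (recalled in Subsection~\ref{lineabilitynotation}) between $\kappa$-lineability of $\mathcal{C}(L,\K)\setminus\mathcal{C}_\rho(L,\K)$ and the bound $\kappa\leq\textup{codim}(\mathcal{C}_\rho(L,\K))$.

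\textbf{The main obstacle.} The delicate point is the third step, namely showing that $f\vert_S\equiv0$ forces $f\equiv0$ on $\rho$ for $f\in\mathcal{C}_L(\rho,\K)$. One has to be careful that every pair $(x,y)\in\rho$ is genuinely accounted for: the decomposition $f(x,y)=f(x,\xi_R([x]))-f(y,\xi_R([y]))$ requires that whenever $x\sim y$ and $x$ (equivalently $y$) lies in a nontrivial class, both representatives-pairs $(x,\xi_R([x]))$ and $(y,\xi_R([y]))$ indeed belong to $S$; this is where one uses that a nontrivial class is contained in $\overline{L'}$ so that both $x,y\in\overline{L'}$, placing the relevant pairs inside $S=(\overline{L'}\times R)\cap\rho$. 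The continuity hypothesis on $\xi_R\circ\pi\vert_{\overline{L'}}$ is exactly what is needed to make $S$ closed so that Theorem~\ref{thmcharacter} applies; without it the graph need not be closed and the argument breaks down. Once closedness of $S$ and the vanishing of the first summand are secured, the remainder is bookkeeping.
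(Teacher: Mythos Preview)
Your proposal is correct and follows essentially the same route as the paper: choose $S=\Gamma(\xi_R\circ\pi\vert_{\overline{L'}})$, invoke Theorem~\ref{thmcharacter}, show the first summand vanishes, and identify the second summand with $\textup{Ker}(\textup{res}_{\overline{R'}}^{\overline{L'}})$ via the graph homeomorphism $S\cong\overline{L'}$. The only cosmetic differences are that the paper lifts elements of $\mathcal{C}_L(\rho,\K)$ to $\mathcal{C}(L,\K)$ via $\Delta_\rho$ to verify the vanishing of the first summand (whereas you use the cocycle identity from Lemma~\ref{orderinjection} directly), and the paper constructs the isomorphism $\Lambda$ explicitly rather than invoking the graph homeomorphism; note also that $R'$ is automatically closed (since $S\cap D_L$ is closed and corresponds to $R'$ under $S\cong\overline{L'}$), so your $\textup{res}_{R'}^{\overline{L'}}$ agrees with the statement's $\textup{res}_{\overline{R'}}^{\overline{L'}}$.
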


\begin{proof}
Since $\xi_R\circ\pi\vert_{\overline{L'}}$ is continuous with Haudorff codomain, we know that $S:=\Gamma(\xi_R\circ\pi\vert_{\overline{L'}})$ is a closed subset of $(L\times R)\cap\rho$. Thus, we can apply Theorem \ref{thmcharacter} as
$$\textup{codim}(\mathcal{C}_\rho(L,\K))=\textup{dim}(\textup{Ker}(\textup{res}_S^\rho)\cap\mathcal{C}_L(\rho,\K))+\textup{dim}(\textup{Ker}(\textup{res}_{S\cap D_L}^S))$$
Now note that, given $\Delta_\rho(f)\in\mathcal{C}_L(\rho,\K)$ (where $f\in\mathcal{C}(L,\K)$), the condition $\Delta_\rho(f)\vert_{S}\equiv0$ is equivalent to $f(x)-f(\xi_R\circ\pi(x))=0$ for all $x\in \overline{L'}$.
Also, $f(x)-f(\xi_R\circ\pi(x))=0$ is also true for $x\not\in \overline{L'}$. Indeed, if $x\not\in \overline{L'}$, then $x\not\in L'$ and so $|[x]|=1$, which implies that $x=\xi_R\circ\pi(x)$.
Hence, it follows that $\Delta_\rho(f)\equiv0$ and so $\textup{Ker}(\textup{res}_S^\rho)\cap\mathcal{C}_L(\rho,\K)=\{0\}$. Thus,
$$\textup{codim}(\mathcal{C}_\rho(L,\K))=\textup{dim}(\textup{Ker}(\textup{res}_{S\cap D_L}^S))$$

It only remains to prove that $\textup{Ker}(\textup{res}_{S\cap D_L}^S)$ and $\textup{Ker}(\textup{res}_{\overline{R'}}^{\overline{L'}})$ are linearly isomorphic. 
To do this, just consider the map $\Lambda:\textup{Ker}(\textup{res}_{S\cap D_L}^S)\to\textup{Ker}(\textup{res}_{\overline{R'}}^{\overline{L'}})$ given by
	$$
	\Lambda(f)(x)=f(x,\xi_R\circ\pi(x)).
	$$
It is well defined by the continuity of $\xi_R\circ\pi\vert_{\overline{L'}}$ and the fact that $S=\Gamma(\xi_R\circ\pi\vert_{\overline{L'}})$.
Indeed, we have $S\cap D_L=\Gamma(\xi_R\circ\pi\vert_{\overline{L'}})\cap D_L=D_{R'}$ since $\text{Fix}(\xi_R\circ\pi\vert_{\overline{L'}})=\overline{L'}\cap R=R'$ so $\Lambda(f)\vert_{R'}\equiv0$ and, by continuity, $\Lambda(f)\vert_{\overline{R'}}\equiv0$.
It is clear that $\Lambda$ is linear. 
Also, it is bijective as it can be easily checked that its inverse $\Lambda^{-1}$ is given by $\Lambda^{-1}(f)(x,y)=f(x)$ which is well-defined since 
$$
S_X:=\{x\in L : \exists(x,y)\in S=\Gamma(\xi_R\circ\pi\vert_{\overline{L'}})\}=\overline{L'}.
$$
This shows the linear isomorphism between $\textup{Ker}(\textup{res}_{S\cap D_L}^S)$ and $\textup{Ker}(\textup{res}_{\overline{R'}}^{\overline{L'}})$, which yields $\textup{codim}(\mathcal{C}_\rho(L,\K))=\textup{dim}(\textup{Ker}(\textup{res}_{\overline{R'}}^{\overline{L'}}))$.
\end{proof}

Proposition~\ref{prpcont} may not seem that helpful at first glance since one would expect the continuity of $\xi_R\circ\pi\vert_{\overline{L'}}$ to be rather uncommon. But, in fact, it gives us three immediate corollaries that are extremely useful in many cases.\\

First, for $\rho$ to be of infinite order, it must be the case that: (A) there are infinitely many classes $[x]\in L/\rho$ such that $|[x]|>1$ or (B) there are finitely many classes $[x]\in L/\rho$ such that $|[x]|>1$ one of which is of infinite cardinality. It turns out that Proposition \ref{prpcont} exhaustively answers the $\kappa$-lineability problem for the latter case.

\begin{corollary} \label{cor:prpcont}
Let $\mathcal{A}\subseteq\mathcal{C}(L,\K)$ be a self-adjoint algebra over $\K$ that vanishes nowhere on $L$. If $\sim_{\mathcal A}$ is infinite and $\{[x]\in L/\sim_{\mathcal{A}}:|[x]|>1\}$ is finite, then  $\mathcal{C}(L,\K)\setminus\overline{\mathcal A}$ is $sw(L')$-lineable but not $sw(L')^+$-lineable.
\end{corollary}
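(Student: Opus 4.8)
The plan is to derive Corollary~\ref{cor:prpcont} as a special case of Proposition~\ref{prpcont} applied to the relation $\rho = \sim_{\mathcal A}$. The crucial observation is that the hypothesis ``$\{[x]\in L/\sim_{\mathcal A}:|[x]|>1\}$ is finite'' forces the set $L'=\{x\in L:|[x]|>1\}$ to be a finite union of equivalence classes, each of which is closed (since $\sim_{\mathcal A}$ is closed, hence each class $[x]$ is closed as a slice of a closed relation). A finite union of closed sets is closed, so $L'$ is already closed and therefore $\overline{L'}=L'$. This is the key structural simplification that makes the messy closure operations in Proposition~\ref{prpcont} collapse.

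First I would fix any set of representatives $R$ of $\sim_{\mathcal A}$ and verify that $\xi_R\circ\pi\restricted{\overline{L'}}=\xi_R\circ\pi\restricted{L'}$ is continuous, so that Proposition~\ref{prpcont} applies. Since $L'$ is a finite union of classes $[x_1],\dots,[x_n]$ and $\xi_R\circ\pi$ is constant on each class (sending every point of $[x_i]$ to the single representative $r_i\in R\cap[x_i]$), the map $\xi_R\circ\pi\restricted{L'}$ has finite image $\{r_1,\dots,r_n\}$ and is locally constant on the finitely many \emph{closed} (hence clopen within $L'$) pieces; thus it is continuous. This is the main thing to check, and it is the point where finiteness of the number of nontrivial classes is genuinely used, so I expect the continuity verification to be the only real obstacle, though a mild one.

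Once continuity is established, Proposition~\ref{prpcont} gives $\textup{codim}(\mathcal{C}_{\sim_{\mathcal A}}(L,\K))=\textup{dim}(\textup{Ker}(\textup{res}_{\overline{R'}}^{\overline{L'}}))$, where $R'=\overline{L'}\cap R=L'\cap R=\{r_1,\dots,r_n\}$ and hence $\overline{R'}=R'$ is a finite set. I would then identify this kernel explicitly: $\textup{Ker}(\textup{res}_{R'}^{L'})=\{f\in\mathcal{C}(L',\K):f\restricted{R'}\equiv 0\}$ consists of the continuous functions on $L'$ vanishing at the finitely many representatives. The remaining task is the cardinal computation of its dimension. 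Since removing a finite set $R'$ of ``marked'' values only drops the codimension by the finite number $|R'|=n$ (one linear condition per representative), I expect $\textup{dim}(\textup{Ker}(\textup{res}_{R'}^{L'}))=\textup{dim}(\mathcal{C}(L',\K))=sw(L')$, using that $sw(L')=\textup{dim}(\mathcal{C}(L',\K))$ is infinite here (because $\sim_{\mathcal A}$ is infinite with finitely many nontrivial classes forces some class, and hence $L'$, to be infinite) and so absorbing the finite correction $n$ leaves $sw(L')$ unchanged.

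Finally, I would invoke Lemma~\ref{lemrealequi} to replace $\mathcal{C}_{\sim_{\mathcal A}}(L,\K)$ by $\overline{\mathcal A}$, and recall from the preliminaries that $\kappa$-lineability of $\mathcal{C}(L,\K)\setminus\overline{\mathcal A}$ is equivalent to $\kappa\leq\textup{codim}(\overline{\mathcal A})$. Having pinned down $\textup{codim}(\overline{\mathcal A})=sw(L')$ exactly, both the positive statement ($sw(L')$-lineable) and the negative statement (not $sw(L')^+$-lineable) follow at once. The whole argument is essentially a dictionary translation from Proposition~\ref{prpcont} into the language of $sw$, with the genuine content being (i) the closedness of $L'$ from finiteness of the nontrivial classes, yielding the needed continuity, and (ii) the infinite-cardinal absorption that makes the finite codimension correction invisible.
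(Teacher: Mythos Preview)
Your proposal is correct and follows essentially the same route as the paper: both arguments observe that finiteness of $\{[x]:|[x]|>1\}$ makes $L'$ a finite union of closed classes (hence closed), verify continuity of $\xi_R\circ\pi\restricted{L'}$ via its being constant on each closed piece, invoke Proposition~\ref{prpcont}, and then use that the kernel of the restriction to the finite set $R'$ has finite codimension in the infinite-dimensional space $\mathcal{C}(L',\K)$ to conclude $\textup{codim}(\overline{\mathcal A})=sw(L')$. The paper phrases the final cardinal step via the rank--nullity identity $sw(L')=\dim(\textup{Ker})+\dim(\textup{Im})$ with $\dim(\textup{Im})$ finite, which is exactly your ``infinite-cardinal absorption'' argument.
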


\begin{proof}
Consider any set of representatives $R$ of $\sim_{\mathcal{A}}$. First, $R'=R\cap L'=\{r_1,\ldots,r_n\}$ has to be finite. Since $\sim_{\mathcal{A}}$ is closed, we also know that $L'=\bigcup_{i=1}^n[r_i]$ has to be closed. Now consider the map $\xi_R\circ\pi\vert_{L'}:L'\to R$ and note that it has to be continuous since $L'=\bigcup_{i=1}^n[r_i]$ and for each closed set $[r_i]$ the restriction $\xi_R\circ\pi\vert_{[r_i]}$ is constant and hence continuous. This means that we can apply Proposition \ref{prpcont} to $\overline{\mathcal{A}}=\mathcal{C}_{\sim_{\mathcal{A}}}(L,\K)$ as
$$
\text{codim}(\overline{\mathcal{A}})=\text{dim}(\textup{Ker}(\textup{res}_{L'\cap R}^{L'})).
$$
Moreover, since $\mathcal{C}(L',\K)$ is infinite-dimensional and $\textup{Im}(\textup{res}_{L'\cap R}^{L'}))=\mathcal{C}(L'\cap R,\K)$ is finite-dimensional, it follows by kernel-image dimensionality that 
$$sw(L')=\text{dim}(\mathcal{C}(L',\K))=\text{dim}(\textup{Ker}(\textup{res}_{L'\cap R}^{L'}))+\text{dim}(\textup{Im}(\textup{res}_{L'\cap R}^{L'}))=\text{dim}(\textup{Ker}(\textup{res}_{L'\cap R}^{L'}))$$
which, in turn, implies $\text{codim}(\overline{\mathcal{A}})=sw(L')$ as desired.
\end{proof}

In order to provide the second corollary, we need some additional notation and definitions (take into account the notions we have provided in Subsection \ref{lineabilitynotation}). Fávaro, Pellegrino, Raposo Jr. and Ribeiro introduced the concept of $(\alpha,\beta)$-dense-lineability in \cite{FPRR} similarly to $(\alpha,\beta)$-spaceability. We say that $M$ is {\it $(\alpha,\beta)$-dense-lineable} if $M$ is $\alpha$-dense-lineable and for every $\alpha$-dimensional vector subspace $V_\alpha$ of $V$ with $V_\alpha \subseteq M\cup \{0\}$, there is a dense $\beta$-dimensional subspace $V_\beta$ of $V$ such that $V_\alpha \subseteq V_\beta \subseteq M\cup \{0\}$ holds true. Also, $(\alpha_1,\beta)$-dense-lineability does not imply $(\alpha_2,\beta)$-dense-lineability and viceversa. Also in \cite{FPRR}, the concept of pointwise $\alpha$-dense-lineability was established. We say that $M$ is {\it pointwise $\alpha$-dense-lineable} if for every $x\in M$ there is a dense $\alpha$-dimensional subspace $V_\alpha$ of $V$ such that $x\in V_\alpha \subseteq A\cup \{0\}$. It is obvious that pointwise $\beta$-dense-lineability implies $(1,\beta)$-dense-lineability, but the converse does not hold in general. Finally, we say that $M$ is {\it pointwise maximal-dense-lineable} if $M$ is pointwise $\dim(V)$-dense-lineable. 
Now, we are ready to provide the following result.

\begin{corollary}\label{corsup}
	Let $\mathcal{A}\subseteq\mathcal{C}(L,\K)$ be a self-adjoint algebra over $\K$ that vanishes nowhere on $L$. 
	If $\sim_{\mathcal A}$ is infinite and $\{[x]\in L/\sim_{\mathcal{A}}:|[x]|>1\}$ is finite, then $\mathcal{C}(L,\K)\setminus\overline{\mathcal A}$ is
	\begin{itemize}
		\item[(i)] $(\alpha,\mathfrak c)$-spaceable if and only if $\alpha < \aleph_0$, and
		
		\item[(ii)] pointwise $\mathfrak c$-spaceable.
	\end{itemize}
	Moreover, if $w(L)\leq sw(L')$, then $\mathcal{C}(L,\K)\setminus\overline{\mathcal{A}}$ is
	\begin{itemize}
		\item[(iii)] $(\alpha,\beta)$-dense-lineable for every $\alpha < sw(L')$ and $\max\{\alpha,w(L)\}\leq \beta \leq sw(L')$, and
		
		\item[(iv)] pointwise $sw(L')$-dense-lineable.
	\end{itemize}
\end{corollary}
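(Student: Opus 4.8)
The plan is to combine Corollary~\ref{cor:prpcont}, which yields $\textup{codim}(\overline{\mathcal{A}})=sw(L')$ under the stated hypotheses, with the known structural results on $(\alpha,\beta)$-spaceability and $(\alpha,\beta)$-dense-lineability recalled in Subsection~\ref{lineabilitynotation}. Since $\{[x]\in L/\sim_{\mathcal A}:|[x]|>1\}$ is finite and $\sim_{\mathcal A}$ is infinite, exactly one of these finitely many classes must be infinite, so $L'$ is an infinite compact space and $sw(L')\geq\mathfrak c$ is infinite. By Lemma~\ref{lemrealequi} we have $\overline{\mathcal A}=\mathcal C_{\sim_{\mathcal A}}(L,\K)$, which is a closed subspace of the Fréchet space $\mathcal C(L,\K)$ of infinite codimension $sw(L')$. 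This places us squarely in the setting where the cited abstract theorems apply to the complement of a closed subspace.

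First I would prove (i) and (ii). The key observation is that $\mathcal C(L,\K)\setminus\mathcal C_{\sim_{\mathcal A}}(L,\K)$ is the complement of a closed subspace $W$ of infinite codimension in a separable-or-not Fréchet space. For (i), I would invoke the characterization of Araújo, Barbosa, Raposo Jr.\ and Ribeiro (the same mechanism behind \cite[Theorem~3]{ABRR}, applicable to complements of closed subspaces of infinite codimension): such a set is $(\alpha,\mathfrak c)$-spaceable precisely when $\alpha<\aleph_0$, the failure for $\alpha\geq\aleph_0$ coming from \cite[Corollary~2.4]{FPRR}-type obstructions, while the positive direction for finite $\alpha$ uses that any finite-dimensional $V_\alpha\subseteq M\cup\{0\}$ can be completed to a closed $\mathfrak c$-dimensional subspace inside $M\cup\{0\}$ by the Kitson--Timoney spaceability mechanism \cite[Theorem~2.2]{KT} applied to a suitable complemented piece. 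Part (ii), pointwise $\mathfrak c$-spaceability, follows by the same completion argument applied to the one-dimensional span of an arbitrary $x\in M$.

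Next I would establish (iii) and (iv) under the extra hypothesis $w(L)\leq sw(L')$. The point is that $\textup{dens}(\mathcal C(L,\K))=w(L)\leq sw(L')=\textup{codim}(\overline{\mathcal A})$, so by the Leonetti--Russo--Somaglia result \cite[Corollary~3.2]{LRS} already quoted, $M=\mathcal C(L,\K)\setminus\overline{\mathcal A}$ is $w(L)$-dense-lineable, and in fact $sw(L')$-dense-lineable since $sw(L')\leq\textup{codim}(\overline{\mathcal A})$. For (iii), I would apply the $(\alpha,\beta)$-dense-lineability machinery of Fávaro, Pellegrino, Raposo Jr.\ and Ribeiro \cite{FPRR}: the required hypotheses are $\alpha<\textup{codim}(\overline{\mathcal A})=sw(L')$ and $\max\{\alpha,\textup{dens}(\mathcal C(L,\K))\}=\max\{\alpha,w(L)\}\leq\beta\leq sw(L')$, which is exactly the stated range. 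One extends any $\alpha$-dimensional $V_\alpha\subseteq M\cup\{0\}$ to a dense $\beta$-dimensional subspace by combining $V_\alpha$ with a dense subspace of a complementary copy and then truncating to dimension $\beta$, staying inside $M\cup\{0\}$ because the complementary directions avoid $\overline{\mathcal A}$. Part (iv), pointwise $sw(L')$-dense-lineability, is the case $\alpha=1$ with $x$ prescribed, again via \cite{FPRR}.

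The main obstacle I anticipate is verifying that the abstract complement-of-a-closed-subspace theorems apply verbatim to $\mathcal C(L,\K)$, which need not be separable: the cited results \cite{ABRR,FPRR} are sometimes phrased for Banach spaces with an unconditional basis or for separable spaces, whereas here $\mathcal C(L,\K)$ may have density character $w(L)>\aleph_0$. I would therefore need to confirm that the hypotheses of each cited theorem reduce to the two cardinal inequalities $\textup{codim}(\overline{\mathcal A})$ infinite and $w(L)\leq sw(L')$, and that no separability is secretly used in the spaceability/dense-lineability completions. If a cited result genuinely requires separability, the remedy is to pass to the closed subspace $\mathcal C(L',\K)$-pullback where the relevant codimension $sw(L')$ is realized and apply the theorem there before transporting the conclusion back via the isometric structure furnished by Corollary~\ref{cor:prpcont}; this localization is the step most likely to demand careful bookkeeping.
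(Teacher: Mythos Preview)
Your proposal is correct and matches the paper's approach: the paper states Corollary~\ref{corsup} without proof, treating it as an immediate consequence of Corollary~\ref{cor:prpcont} (which gives $\textup{codim}(\overline{\mathcal A})=sw(L')\geq\mathfrak c$) together with the general $(\alpha,\beta)$-spaceability and $(\alpha,\beta)$-dense-lineability criteria from \cite{FPRR} and \cite{ABRR}, exactly as you outline. One inconsequential slip: you write ``exactly one of these finitely many classes must be infinite,'' but of course it is \emph{at least} one; this does not affect the argument, since all you need is that $L'$ is infinite. Your closing worry about separability is prudent but ultimately unnecessary here: the relevant results in \cite{FPRR} are stated for general Banach spaces and only require the cardinal inequalities you identify (infinite codimension for (i)--(ii), and $\textup{dens}(\mathcal C(L,\K))=w(L)\leq sw(L')=\textup{codim}(\overline{\mathcal A})$ for (iii)--(iv)).
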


The third corollary of Proposition \ref{prpcont} is the following which has to do with retractions.

\begin{corollary}\label{corretract}
Let $S\subseteq L$ be a closed subset of $L$ and $T$ a retract of $S$ given by the retraction $r:S\to T$. Denote
	$$
	\mathcal{C}_r(L,\K)=:\{f\in\mathcal{C}(L,\K) : f(x)=f(r(x)) \text{ for all } x\in S \},
	$$
Then $\mathcal{C}_r(L,\K)$ is a closed subspace of $\mathcal{C}(L,\K)$ and
	$$
	\textup{codim}(\mathcal C_r(L,\K))=\textup{dim}(\{f\in\mathcal{C}(S,\K):f\vert_T\equiv0\})=:\kappa_0,
	$$
that is, $\mathcal{C}(L,\K)\setminus\mathcal{C}_r(L,\K)$ is $\kappa_0$-lineable but not $\kappa_0^+$-lineable. \end{corollary}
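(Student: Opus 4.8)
The plan is to realise $\mathcal{C}_r(L,\K)$ as $\mathcal{C}_\rho(L,\K)$ for an appropriate closed equivalence relation and then apply Proposition~\ref{prpcont}. Define $\rho\subseteq L\times L$ by $x\,\rho\,y$ if and only if $x=y$, or $x,y\in S$ and $r(x)=r(y)$; equivalently, $\rho=D_L\cup\{(x,y)\in S\times S:r(x)=r(y)\}$. Reflexivity, symmetry and transitivity are immediate from the corresponding properties of the equality $r(x)=r(y)$. The classes contained in $S$ are exactly the fibres $r^{-1}(t)$ ($t\in T$), while every point of $L\setminus S$ forms a singleton class; consequently $f\in\mathcal{C}(L,\K)$ is $\rho$-invariant precisely when $f(x)=f(r(x))$ for all $x\in S$, so that $\mathcal{C}_\rho(L,\K)=\mathcal{C}_r(L,\K)$. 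In particular $\mathcal{C}_r(L,\K)$ is closed, since $\mathcal{C}_\rho(L,\K)$ always is (as noted in the Preliminaries). To see that $\rho$ is closed, note that $\{(x,y)\in S\times S:r(x)=r(y)\}$ is the preimage of the diagonal $\{(t,t):t\in T\}\subseteq T\times T$ under the continuous map $(x,y)\mapsto(r(x),r(y))$ defined on the closed set $S\times S$, and this diagonal is closed because $T$ is Hausdorff.

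Next I would choose representatives adapted to $r$. Since $r\vert_T=\mathrm{id}_T$, the fixed-point set of $r$ is exactly $T$, and therefore $T$ meets each fibre $r^{-1}(t)$ in the single point $t$; hence $R:=T\sqcup(L\setminus S)$ is a set of representatives of $\rho$, with associated map satisfying $\xi_R\circ\pi=r$ on $S$ and $\xi_R\circ\pi=\mathrm{id}$ on $L\setminus S$. Because every non-trivial class lies in the closed set $S$, we have $L'\subseteq S$, hence $\overline{L'}\subseteq S$, and on $\overline{L'}$ the map $\xi_R\circ\pi$ is just $r\vert_{\overline{L'}}$, which is continuous. Thus the hypothesis of Proposition~\ref{prpcont} holds, and it yields that $\mathcal{C}(L,\K)\setminus\mathcal{C}_r(L,\K)$ is $\kappa_0'$-lineable but not $(\kappa_0')^+$-lineable, where, writing $R':=\overline{L'}\cap R$, we set $\kappa_0':=\textup{dim}(\textup{Ker}(\textup{res}_{\overline{R'}}^{\overline{L'}}))$.

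It remains to identify $\kappa_0'$ with $\kappa_0=\textup{dim}(\{f\in\mathcal{C}(S,\K):f\vert_T\equiv0\})$, and this is where I expect the only real work to lie. I would first record that $\overline{R'}=\overline{L'}\cap T$ (because $\overline{L'}\subseteq S$, $R\cap S=T$, and both sets are closed) and that $S=\overline{L'}\cup T$: indeed any $x\in S\setminus\overline{L'}$ has a singleton class, so $x=r(x)\in T$. The restriction map $\Phi\colon f\mapsto f\vert_{\overline{L'}}$ then carries $\{f\in\mathcal{C}(S,\K):f\vert_T\equiv0\}$ into $\textup{Ker}(\textup{res}_{\overline{R'}}^{\overline{L'}})$; it is injective since a function vanishing on both $T$ and $\overline{L'}$ vanishes on $S=\overline{L'}\cup T$, and it is surjective because, given an admissible $g$ on $\overline{L'}$, pasting $g$ with the zero function on $T$ (they agree on $\overline{L'}\cap T=\overline{R'}$, where $g$ vanishes) produces, via the pasting lemma for the two closed pieces covering $S$, a preimage in $\mathcal{C}(S,\K)$. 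Hence $\Phi$ is a linear isomorphism and $\kappa_0'=\kappa_0$, completing the argument.

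Finally, I would point out that this reconciliation — the passage between the descriptions of $\kappa_0$ in terms of $(\overline{L'},\overline{R'})$ and in terms of $(S,T)$ — is the only delicate point, and that it can be bypassed by applying Theorem~\ref{thmcharacter} directly to the closed set $S':=\Gamma(r)\subseteq(L\times R)\cap\rho$. Under the homeomorphism $x\mapsto(x,r(x))$ one has $\mathcal{C}(S',\K)\cong\mathcal{C}(S,\K)$ and $S'\cap D_L=\{(t,t):t\in T\}\cong T$, so $\textup{Ker}(\textup{res}_{S'\cap D_L}^{S'})\cong\{f\in\mathcal{C}(S,\K):f\vert_T\equiv0\}$; moreover $\textup{Ker}(\textup{res}_{S'}^\rho)\cap\mathcal{C}_L(\rho,\K)=\{0\}$, because $\Delta_\rho(f)\vert_{S'}\equiv0$ says exactly that $f\in\mathcal{C}_r(L,\K)=\textup{Ker}(\Delta_\rho)$. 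Theorem~\ref{thmcharacter} then gives $\textup{codim}(\mathcal{C}_r(L,\K))=\kappa_0$ at once.
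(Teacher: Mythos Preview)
Your proof is correct and follows essentially the same route as the paper: introduce the equivalence relation $\rho$ induced by $r$, verify the continuity hypothesis of Proposition~\ref{prpcont}, and then identify $\textup{Ker}(\textup{res}^{\overline{L'}}_{\overline{R'}})$ with $\{f\in\mathcal{C}(S,\K):f\vert_T\equiv0\}$ via restriction and gluing along the closed covering $S=\overline{L'}\cup T$. Your handling of this last identification is in fact tidier than the paper's --- you work abstractly with the relations $\overline{L'}\subseteq S$, $S=\overline{L'}\cup T$ and $\overline{R'}=\overline{L'}\cap T$, whereas the paper computes $\overline{L'}$ and $R'$ explicitly in terms of $T^{\mathrm{o}}$ and $r[S\setminus T^{\mathrm{o}}]$ --- and your closing remark, applying Theorem~\ref{thmcharacter} directly to $S'=\Gamma(r)$, is a genuine shortcut that the paper does not take: it reaches $\textup{codim}(\mathcal{C}_r(L,\K))=\kappa_0$ without passing through Proposition~\ref{prpcont} at all.
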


\begin{proof}
Consider the relation $\sim_r$ defined on $L$ as
	$$
	x\sim_r y\iff x=y \text{ or } r(x)=r(y) \text{ provided that } x,y\in S.
	$$ 
Note that the function $r\times r:S\times S\to S\times S$ given by $(x,y)\mapsto (r(x),r(y))$ is continuous and so $\sim_r=D_L\cup (r\times r)^{-1}[D_T]$ is closed. This means that $\mathcal{C}_r(L,\K)=\mathcal{C}_{\sim_r}(L,\K)$ and we can apply Proposition~\ref{prpcont}. 
We have that $L'=\{x\in L:|[x]_{\sim_r}|>1\}=(S\setminus T)\cup r[S\setminus T]$ and the natural choice for the set of representatives is $R:=T$. Now,
$$\overline{L'}=\overline{S\setminus T}\cup \overline{r[S\setminus T]}=\overline{S\setminus T}\cup \overline{r\left[\overline{S\setminus T}\right]}=\overline{S\setminus T}\cup r\left[\overline{S\setminus T}\right]=S\setminus T^{\mathrm{o}}\cup r[S\setminus T^{\mathrm{o}}]$$
and so, since $T$ is closed, $R':=\overline{L'}\cap R=\overline{L'}\cap T=\partial T\cup r[S\setminus T^{\mathrm{o}}]$ which is closed. Now, from Proposition~\ref{prpcont}, we deduce that
$$
\textup{codim}(\mathcal C_r(L,\K))=\textup{dim}(\{f\in\mathcal{C}(\overline{L'},\K):f\vert_{R'} \equiv 0\})
$$
which is almost what we wanted. To obtain the correct expression, consider the map $H:\text{Ker}(\text{res}^S_T)\to\text{Ker}(\text{res}^{\overline{L'}}_{R'})=\text{Ker}(\text{res}^{S\cap\overline{L'}}_{T\cap\overline{L'}})$ given by $f\mapsto f\vert_{\overline{L'}}$ (i.e., $H=\left(\text{res}^S_{\overline{L'}}\right)\vert_{\text{Ker}(\text{res}^S_T)}$). It is clearly well-defined and linear. 
To see that it is injective, note that, if $f\vert_T\equiv0$, then $f\vert_{\overline{L'}}\equiv0$ implies that $f\equiv0$ since $T\cup\overline{L'}=S$ and so $\text{Ker}(H)=\{0\}$. To see that it is surjective, for a given $g\in\text{Ker}(\text{res}^{\overline{L'}}_{R'})$, extend it as $\overline{g}:S\to\R$ with
$$\overline{g}(x)=\begin{cases}g(x)&\text{if }x\in\overline{L'}\\ 0&\text{if }x\in T\end{cases}$$
which is well defined since $\overline{L'}\cap T=R'$ and $g\vert_{R'}\equiv0$. Moreover, since it is continuous in $\overline{L'}$ and in $T$ with $\overline{L'}\cup T=S$, $\overline{g}$ is continous and, by construction, $\overline{g}\in\text{Ker}(\text{res}^{S}_{T})$. So, since $\overline{g}\vert_{\overline{L'}}=g$, this proves the surjectivity of $H$. Overall,  $\text{Ker}(\text{res}^S_T)$ is linearly isomorphic to $\text{Ker}(\text{res}^{\overline{L'}}_{R'})$ and so
$$\textup{codim}(\mathcal C_r(L,\K))=\textup{dim}(\{f\in\mathcal{C}(S,\K):f\vert_T\equiv0\})$$
as we wanted to show.
\end{proof}

Let us now display the power of Corollary~\ref{corretract} with two examples.

\begin{example}
Let $\kappa\geq \aleph_0$.
Consider the Tychonoff cube $[0,1]^\kappa$ and the subset of $\mathcal{C}([0,1]^\kappa,\K)$ given by $W=\{f\in\mathcal{C}([0,1]^\kappa,\K):f(0,\cdot)=f(1,\cdot)\}$. 
Let us try to study the $\kappa$-lineability of $\mathcal{C}([0,1]^\kappa,\K)\setminus W$. 

Denote the projection map that gives the first entry of points in $[0,1]^\kappa$ as $\pi_0:[0,1]^\kappa\to[0,1]$.
Now denote $F_0=\pi_0^{-1}[\{0\}]$ and $F_1=\pi_0^{-1}[\{1\}]$ which may be seen as two ``opposite faces'' of Tychonoff's cube. Then, $S:=F_0\cup F_1\subseteq[0,1]^\kappa$ is a closed subset and $T:=F_0$ is a retract of $S$ given by the retraction $r:S\to T$ with
$$r(x)(\alpha)=\begin{cases}0 & \text{if }\alpha=0\\ x(\alpha)&\text{if }\alpha>0\end{cases}$$
Now note that $W=\mathcal{C}_r([0,1]^\kappa,\K)$ which means that we can apply the Corollary~\ref{corretract} as
$$\text{codim}(W)=\text{dim}(\{f\in\mathcal{C}(F_0\cup F_1,\K):f\vert_{F_0} \equiv0\})$$
and, since $\{F_0,F_1\}$ is a disconnection of $F_0\cup F_1$ and $F_1$ is homeomorphic to $[0,1]^\kappa$, we conclude that $\text{dim}(\{f\in\mathcal{C}(F_0\cup F_1,\K):f\vert_{F_0} \equiv0\})=\text{dim}(\mathcal{C}([0,1]^\kappa,\K))$.
But also
$$\text{dim}(\mathcal{C}([0,1]^\kappa,\K))=sw([0,1]^\kappa)=w([0,1]^\kappa)^{\aleph_0}=\kappa^{\aleph_0}$$
which means that $\mathcal{C}([0,1]^\kappa,\K)\setminus W$ is maximal-lineable.
Hence, by \cite[Corollary~4.4]{FPRR}, the set $\mathcal{C}([0,1]^\kappa,\K)\setminus W$ is pointwise maximal-dense-lineable.
\end{example}

\begin{example}
Consider again the Tychonoff cube $[0,1]^\kappa$ with $\kappa\geq \aleph_0$ and the subset of $\mathcal{C}([0,1]^\kappa,\K)$ given by 
	$$
	U=\{f\in\mathcal{C}([0,1]^\kappa,\K) : f(t_1,\cdot)=f(t_2,\cdot) \text{ for every } t_1,t_2\in [0,1]\}
	$$
That is, $U$ is the space of functions that are constant with respect to the first entry. Now, using the same notation as in the previous example, note that $F_0$ is a retract of $[0,1]^\kappa$ given by $r:[0,1]^\kappa\to F_0$ with
$$r(x)(\alpha)=\begin{cases}0 & \text{if }\alpha=0\\ x(\alpha)&\text{if }\alpha>0\end{cases}$$
(the previous retraction is the restriction of this one to $F_0\cup F_1$). Then we have that $U=\mathcal{C}_r([0,1]^\kappa,\K)$ and so, by Corollary \ref{corretract}, it follows that
	$$
	\text{codim}(U)=\text{dim}(\{f\in\mathcal{C}([0,1]^\kappa,\K):f\vert_{F_0}\equiv 0\})=\text{dim}(\text{Ker}(\text{res}_{F_0}^{I^\kappa})),
	$$
with $I=[0,1]$.
Now, if we denote $Q:=\pi_0^{-1}[[1/2,1]]=\{x\in[0,1]^\kappa:x(0)\in[1/2,1]\}$ which is homeomorphic to $[0,1]^\kappa$, we have that the map $H:\text{Ker}(\text{res}_{F_0}^{I^\kappa})\to\mathcal{C}(Q,\K)$ given by $f\mapsto f\vert_Q$ (so that $H=\text{res}_{Q}^{I^\kappa}\vert_{\mathcal{C}(\text{Ker}(\text{res}_{F_0}^{I^\kappa}),\K)}$) is a well defined linear function. 
Let us now show that $H$ is surjective. Given $g\in\mathcal{C}(Q,\K)$, define $\overline{g}:F_0\cup Q\to\K$ given by
	$$
	\overline{g}(x)=
	\begin{cases}
		g(x) & \text{if }x\in Q,\\ 
		0 & \text{if }x\in F_0.
	\end{cases}
	$$
Since $Q$ and $F_0$ are disjoint and closed, $\overline{g}$ is well-defined and continuous. Then, by Tietze's extension Theorem, there is some $G\in\mathcal{C}([0,1]^\kappa,\K)$ such that $G\vert_{Q\cup F_0}=\overline{g}$. 
In particular, $G\vert_{F_0}\equiv0$, so that $G\in \text{Ker}(\text{res}_{F_0}^{I^\kappa})$ and $H(G)=G\vert_{Q}=g$ which shows the surjectivity of $H$. That means that $\text{dim}(\text{Ker}(\text{res}_{F_0}^{I^\kappa}))\geq\text{dim}(\mathcal{C}(Q,\K))=\text{dim}(\mathcal{C}([0,1]^\kappa,\K))=sw([0,1]^\kappa)=\kappa^{\aleph_0}$ and 
$$\kappa^{\aleph_0}\leq\text{dim}(\text{Ker}(\text{res}_{F_0}^{I^\kappa}))=\text{codim}(U)\leq\kappa^{\aleph_0}$$
So, again, and by \cite[Corollary~4.4]{FPRR}, the set $\mathcal{C}([0,1]^\kappa,\K)\setminus U$ is pointwise maximal-dense-lineable.
\end{example}

A theorem of Miljutin \cite{M} (see also \cite{P}) states that if $L_1$ and $L_2$ are uncountable metric compact sets, then $\mathcal C(L_1,\mathbb K)$ and $\mathcal C(L_2,\mathbb K)$ are isomorphic.
In particular, for any $L$ infinite compact metric set and $\kappa\geq \aleph_0$, $\mathcal C(L^\kappa,\mathbb K)$ is isomorphic to $\mathcal C([0,1]^\kappa,\mathbb K)$.
Thus, in order to study, for instance, the pointwise maximal-dense-lineability of $C(L^\kappa,\mathbb K)\setminus V$, where $V$ is a subspace of $C(L^\kappa,\mathbb K)$, it is enough to analyze $C(L^\kappa,\mathbb K)\setminus W$, where $W$ is isomorphic to a subspace of $\mathcal C([0,1]^\kappa,\mathbb K)$.
The latter is summarized in the following trivial remark. \\

\begin{remark}\label{miljutin1}
	Let $L$ be an infinite metric compact set and $\kappa \geq \aleph_0$.
	If $W$ is a subspace of $\mathcal C([0,1]^\kappa,\mathbb K)$, then the following assertions are equivalent:
	\begin{itemize}
		\item[\textup{(i)}]$\mathcal C([0,1]^\kappa,\mathbb K)\setminus W$ is pointwise maximal-dense-lineable.
		
		\item[\textup{(ii)}] $\mathcal C(L^\kappa,\mathbb K)\setminus \Phi(W)$ is pointwise maximal-dense-lineable for some isomorphism $\Phi : \mathcal C([0,1]^\kappa,\mathbb K) \to \mathcal C(L^\kappa,\mathbb K)$.
		
		\item[\textup{(iii)}] $\mathcal C(L^\kappa,\mathbb K)\setminus \Phi(W)$ is pointwise maximal-dense-lineable for any isomorphism $\Phi : \mathcal C([0,1]^\kappa,\mathbb K) \to \mathcal C(L^\kappa,\mathbb K)$.
	\end{itemize}.
\end{remark}

\subsection{On the complex plane} \label{complex-plane}


Let us now focus on the complex plane $\mathbb{K}=\C$ with $L=\partial D$ which is the boundary of the open unit disk $D:=\{z\in\C:|z|<1\}$. We also define
    $$
    \textup{Pol}(\partial D):=\{f\vert_{\partial D}:f\in\C[z]\}
    $$
and
    $$
    \textup{Hol}(\partial D):=\{f\vert_{\partial D}:f\in\mathcal{C}(\overline D,\C)\text{ is holomorphic on }D\},
    $$
where $\C[z]$ denotes the set of complex polynomials in one variable.
\\

We will now work with the notion of strong alebrability. If $V$ is a (not necessarily unital) algebra, $M\subseteq V$ and $\kappa$ is a cardinal number, then we say that $M$ is {\it strongly $\kappa$-algebrable} if $M\cup \{0\}$ contains a $\kappa$-generated free algebra.
Recall that if $V$ is commutative, then a set $B\subseteq V$ is a
generating set of some free algebra contained in $M$ if and only if for any $n\in \mathbb N$, any nonzero polynomial $P$ in $n$ variables without constant term and any distinct $f_1,\ldots,f_n \in B$, we have $P(f_1,\ldots,f_n)\neq 0$ and $P(f_1,\ldots,f_n)\in M$.

\vspace{0.2cm}

Let $U$ be a domain in $\C$, that is, a nonempty connected open subset of $\C$.
Endow the space of holomorphic functions from $U$ to $\C$, denoted by $\text{Hol}(U)$, with the topology of uniform convergence on compacta (also known as the compact-open topology).
This makes $\text{Hol}(U)$ into a complete metrizable locally convex
topological vector space.
A domain $U\subseteq \C$ is a \textit{Runge domain} if for every $f\in \text{Hol}(U)$, every compact $K\subseteq U$ and every $\varepsilon >0$, there exists a $P\in \C[z]$ such that $|f(z)-P(z)|<\varepsilon$ for every $z\in K$.
If the latter property is not satisfied, then we say that $U$ is a \textit{non-Runge domain}.
All open balls in $\C$ are Runge domain, but, for instance, $\C\setminus \{0\}$ is a non-Runge domain since $f(z)=1/z$ cannot be approximated by polynomials in this topology.
Let us denote $\text{Pol}(U):=\{P|_U:P\in \C[z] \}$.
In \cite{BCLS}, Bernal-González, Calderón-Moreno, López-Salazar and Seoane-Sepúlveda showed that $\text{Hol}(U)\setminus \overline{\text{Pol}(U)}$ is dense-lineable, spaceable, closely algebrable and  strongly $\mathfrak c$-algebrable, assuming that $U\subseteq \C$ is a non-Runge domain.
This line of study continued in \cite{B}, where Ballisco answered a question posed in \cite{BCLS} and framed this type of problem in a more general context.

In this section, we contribute to this ongoing research in the following way.
Instead of considering a Runge domain $U$, we take $\partial D$ which is compact subset of $\C$.
By doing so, now the space $\text{Hol}(\partial D)$ endowed with the topology of uniform convergence on compacta is metrizable and the induced metric is derived form the supremum norm.
Furthermore, as a consequence of Morera's theorem, $\text{Hol}(\partial D)$ is a closed subspace of $\mathcal{C}(\partial D,\C)$.
In this way, we prove that $\mathcal{C}(\partial D,\C)\setminus\overline{\textup{Pol}(\partial D)}\cup \{0\}$ is not only maximal-spaceable and strongly $\mathfrak c$-algebrable, but also $\mathcal{C}(\partial D,\C)\setminus\overline{\textup{Pol}(\partial D)}$ contains an isometric copy of $\text{Hol}(\partial D)$, among other lineability properties.

\vspace{0.2cm}

Let us define the map 
    \begin{align*}
        \varphi:\mathcal{C}(\partial D,\C) & \to\mathcal{C}(\partial D,\C) \\
        f(z) & \mapsto \bar zf(\bar z),
    \end{align*}
and endow $\mathcal{C}(\partial D,\C)$ with the supremum norm (which turns $\mathcal{C}(\partial D,\C)$ into a Banach algebra).
It is easy to prove that $\varphi$ is an involutive linear isometry and, therefore, $\varphi$ is a homeomorphism and a linear automorphism on $\mathcal{C}(\partial D,\C)$.
Since $\textup{Hol}(\partial D)$ is a closed subspace of $\mathcal{C}(\partial D,\C)$, it yields that $\varphi[\textup{Hol}(\partial D)]$ is isomorphically isometric to $\textup{Hol}(\partial D)$ and, furthermore, the following lemma shows that $\varphi[\textup{Hol}(\partial D)]\subseteq (\mathcal{C}(\partial D,\C)\setminus\overline{\textup{Pol}(\partial D)})\cup \{0\}$.

\begin{lemma}\label{lemmacomplexbase}
The set $(\mathcal{C}(\partial D,\C)\setminus\overline{\textup{Pol}(\partial D)})\cup \{0\}$ contains an isometric copy of $\textup{Hol}(\partial D)$.
In particular, $\varphi[\textup{Hol}(\partial D)]\setminus \{0\}\subseteq \mathcal{C}(\partial D,\C)\setminus\overline{\textup{Pol}(\partial D)}$.
\end{lemma}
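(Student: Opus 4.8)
The plan is to build the entire statement on the fact, already established above, that $\varphi$ is an involutive linear isometry of $\mathcal{C}(\partial D,\C)$. Consequently $\varphi[\textup{Hol}(\partial D)]$ is automatically an isometric copy of $\textup{Hol}(\partial D)$, and the whole lemma reduces to its ``in particular'' clause, namely that $\varphi$ maps every nonzero element of $\textup{Hol}(\partial D)$ outside $\overline{\textup{Pol}(\partial D)}$. Since every polynomial restricts to a member of the \emph{closed} subspace $\textup{Hol}(\partial D)$, we have $\overline{\textup{Pol}(\partial D)}\subseteq\textup{Hol}(\partial D)$, so it is enough to prove the stronger assertion that $\varphi(g)\notin\textup{Hol}(\partial D)$ whenever $g\in\textup{Hol}(\partial D)\setminus\{0\}$.

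To establish this I would pass to Fourier coefficients on the circle, writing $\hat h(n)=\frac{1}{2\pi}\int_0^{2\pi}h(e^{i\theta})e^{-in\theta}\,d\theta$ for $h\in\mathcal{C}(\partial D,\C)$ and every integer $n$. The one ingredient about $\textup{Hol}(\partial D)$ that I need is that its members have vanishing negative Fourier coefficients, i.e. $h\in\textup{Hol}(\partial D)$ implies $\hat h(n)=0$ for all $n<0$; this follows from the standard disk-algebra argument (approximate the extension $f$ by its dilations $f_r(z)=f(rz)$, whose Taylor expansions carry no negative frequencies, and let $r\to 1^-$, or alternatively invoke Cauchy's theorem for the disk algebra).

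The heart of the proof is then the computation of $\widehat{\varphi(g)}$. Performing the substitution $\theta\mapsto-\theta$ in the defining integral of $\varphi(g)(e^{i\theta})=e^{-i\theta}g(e^{-i\theta})$ yields the clean identity $\widehat{\varphi(g)}(k)=\hat g(-k-1)$ for every integer $k$ (the underlying intuition being that $\varphi$ acts on monomials by $z^n\mapsto\bar z^{\,n+1}=z^{-(n+1)}$, so it shifts a frequency support contained in $\{0,1,2,\dots\}$ to one contained in $\{-1,-2,-3,\dots\}$). Because $g\in\textup{Hol}(\partial D)$ forces $\hat g(m)=0$ for $m<0$, and $-k-1<0$ exactly when $k\geq 0$, we obtain $\widehat{\varphi(g)}(k)=0$ for every $k\geq 0$.

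With these two frequency facts in hand the conclusion is immediate. If $\varphi(g)$ also lay in $\textup{Hol}(\partial D)$, its negative Fourier coefficients would vanish as well, so \emph{all} of its Fourier coefficients would vanish and, by uniqueness of Fourier expansions for continuous functions, $\varphi(g)\equiv 0$; injectivity of $\varphi$ then forces $g=0$, contradicting $g\neq 0$. Hence $\varphi(g)\notin\textup{Hol}(\partial D)\supseteq\overline{\textup{Pol}(\partial D)}$, and since $\varphi(0)=0$, the isometric copy $\varphi[\textup{Hol}(\partial D)]$ lies in $(\mathcal{C}(\partial D,\C)\setminus\overline{\textup{Pol}(\partial D)})\cup\{0\}$, which is exactly what was required. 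I expect the only genuinely delicate points to be the frequency computation $\widehat{\varphi(g)}(k)=\hat g(-k-1)$ and the careful recording of the disk-algebra fact above; the isometry and injectivity of $\varphi$ and the inclusion $\overline{\textup{Pol}(\partial D)}\subseteq\textup{Hol}(\partial D)$ are all available from the discussion preceding the lemma.
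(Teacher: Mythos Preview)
Your proof is correct, but it takes a genuinely different route from the paper's. The paper argues directly with contour integrals: for $f\in\textup{Hol}(\partial D)$ with $f(0)\neq 0$ it computes $\frac{1}{2\pi i}\oint_{\partial D}\bar z f(\bar z)\,dz=f(0)$ via the Cauchy integral formula (using $\bar z=1/z$ on $\partial D$), and from $\oint_{\partial D}p(z)\,dz=0$ for any polynomial $p$ it deduces the quantitative bound $\|\varphi(f)-p\|_\infty\geq|f(0)|$; the case $f(0)=0$ is then handled by factoring $f(z)=z^n g(z)$ with $g(0)\neq 0$ and observing that $z^n\varphi(f)=\varphi(g)$, so approximability of $\varphi(f)$ by polynomials would force approximability of $\varphi(g)$, contradicting the first case. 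Your argument instead passes through the Fourier side: you use the disk-algebra fact $\hat h(n)=0$ for $n<0$ when $h\in\textup{Hol}(\partial D)$, compute $\widehat{\varphi(g)}(k)=\hat g(-k-1)$, and conclude that $\varphi(g)$ has Fourier spectrum in $\{k<0\}$ whenever $g$ has spectrum in $\{k\geq 0\}$, so $\varphi(g)\in\textup{Hol}(\partial D)$ would force all coefficients to vanish. Your approach is more conceptual and avoids the case split entirely; the paper's approach, on the other hand, yields the explicit distance estimate $d(\varphi(f),\textup{Pol}(\partial D))\geq|f(0)|$, which your Fourier argument does not immediately provide.
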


\begin{proof}
It is enough to show that $\varphi[\textup{Hol}(\partial D)]\setminus \{0\} \subseteq \mathcal{C}(\partial D,\C)\setminus\overline{\textup{Pol}(\partial D)}$.
Assume first that $f(0)\neq 0$.
Observe that, since $\bar z=1/z$ for any $z\in\partial D$, we have via the Cauchy integral formula
    $$
    \frac{1}{2\pi i}\oint_{\partial D}\bar zf(\bar z)dz=\frac{1}{2\pi i}\oint_{\partial D}\frac{1}{z}f\left(\frac{1}{z}\right)dz=\frac{1}{2\pi i}\oint_{\partial D}\frac{f\left(w\right)}{w}dw=f(0),
    $$
where we have made the substitution $w=1/z$. 
Now, given $p\in\text{Pol}(\partial D)$, we have
    \begin{align*}
        \norm{\varphi(f)-p}_\infty & = \sup_{z\in\partial D}|\bar zf(\bar z)-p(z)| \\
        & \geq\frac{1}{2\pi}\oint_{\partial D}\vert\bar zf(\bar z)-p(z)\vert |dz| \\
        & \geq\left\vert\frac{1}{2\pi i}\oint_{\partial D}(\bar zf(\bar z)-p(z))dz\right\vert=|f(0)|,
    \end{align*}
since $\oint_{\partial D} p(z)dz=0$. Hence, we have $d(\varphi(f),\text{Pol}(\partial D))\geq|f(0)| >0$, as needed.

To finish the proof, assume that $f(0)=0$.
Then, there is an $n \in \N$ and a $g\in\textup{Hol}(\partial D)$ with $g(0) \neq 0$ such that $f(z)=z^ng(z)$.
Now, by way of contradiction, assume that $\varphi(f)\in\overline{\text{Pol}(\partial D)}$. 
Then, there would exist $(p_k(z))_{k=1}^\infty \subseteq\text{Pol}(\partial D)$ converging uniformly to $\varphi(f)(z)$ on $\partial D$. 
But note that this would imply that $(z^np_k(z))_{k=1}^\infty \subseteq\text{Pol}(\partial D)$ converges to 
    $$
    z^n\varphi(f)(z)=z^n\bar zf(\bar z)=z^n\bar z\bar z^ng(\bar z)=\bar zg(\bar z)=\varphi(g)(z),
    $$
which is impossible since $g(0)\neq0$. 
\end{proof}

\begin{theorem} 
The set $\mathcal{C}(\partial D,\C)\setminus\overline{\textup{Pol}(\partial D)}$ is maximal-spaceable.
\end{theorem}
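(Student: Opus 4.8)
The plan is to exhibit a single closed subspace of maximal dimension sitting inside $(\mathcal{C}(\partial D,\C)\setminus\overline{\textup{Pol}(\partial D)})\cup\{0\}$, and essentially all the analytic work for this has already been done in Lemma~\ref{lemmacomplexbase}. The candidate subspace is $\varphi[\textup{Hol}(\partial D)]$. Since $\varphi$ is an involutive linear isometry (hence a linear homeomorphism) and $\textup{Hol}(\partial D)$ is a closed subspace of $\mathcal{C}(\partial D,\C)$ by Morera's theorem, its image $\varphi[\textup{Hol}(\partial D)]$ is again a closed subspace, isometrically isomorphic to $\textup{Hol}(\partial D)$. By Lemma~\ref{lemmacomplexbase}, we have $\varphi[\textup{Hol}(\partial D)]\setminus\{0\}\subseteq\mathcal{C}(\partial D,\C)\setminus\overline{\textup{Pol}(\partial D)}$; equivalently, $\varphi[\textup{Hol}(\partial D)]\subseteq(\mathcal{C}(\partial D,\C)\setminus\overline{\textup{Pol}(\partial D)})\cup\{0\}$.

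It then remains to check that this closed subspace has the maximal possible dimension, namely $\dim(\mathcal{C}(\partial D,\C))$. First I would record that $\mathcal{C}(\partial D,\C)$ is an infinite-dimensional separable Banach space (as $\partial D$ is a compact metric space), so its algebraic dimension equals $\mathfrak{c}$: indeed, any infinite-dimensional Banach space has Hamel dimension at least $\mathfrak{c}$ by a Baire category argument, and separability forces the cardinality, and hence the dimension, to be exactly $\mathfrak{c}$. The same reasoning applies to $\textup{Hol}(\partial D)$, which is a closed subspace of a separable space (hence separable) and is infinite-dimensional since it contains $\textup{Pol}(\partial D)$; thus $\dim(\textup{Hol}(\partial D))=\mathfrak{c}$ as well. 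Because $\varphi$ restricts to a linear isomorphism onto its image, $\dim(\varphi[\textup{Hol}(\partial D)])=\dim(\textup{Hol}(\partial D))=\mathfrak{c}=\dim(\mathcal{C}(\partial D,\C))$.

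Putting the two paragraphs together, $\varphi[\textup{Hol}(\partial D)]$ is a closed subspace of $\mathcal{C}(\partial D,\C)$ of dimension $\dim(\mathcal{C}(\partial D,\C))$ contained in $(\mathcal{C}(\partial D,\C)\setminus\overline{\textup{Pol}(\partial D)})\cup\{0\}$, which is precisely the assertion that $\mathcal{C}(\partial D,\C)\setminus\overline{\textup{Pol}(\partial D)}$ is maximal-spaceable. I do not expect a genuine obstacle here: the real content is Lemma~\ref{lemmacomplexbase}, and the remaining step is the standard cardinal bookkeeping that the algebraic dimension of an infinite-dimensional separable Banach space equals $\mathfrak{c}$. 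The one point worth stating carefully is that maximal-spaceability requires the witnessing subspace to be \emph{simultaneously} closed and of dimension $\dim(V)$, and both properties are transported from $\textup{Hol}(\partial D)$ through the isometric isomorphism $\varphi$.
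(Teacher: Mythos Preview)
Your argument is correct and follows essentially the same approach as the paper: both invoke Lemma~\ref{lemmacomplexbase} to obtain the closed subspace $\varphi[\textup{Hol}(\partial D)]$ inside $(\mathcal{C}(\partial D,\C)\setminus\overline{\textup{Pol}(\partial D)})\cup\{0\}$, and then use the standard fact that an infinite-dimensional separable Banach space has algebraic dimension exactly $\mathfrak{c}$ to conclude maximal-spaceability. Your write-up simply spells out the cardinality bookkeeping in a bit more detail than the paper does.
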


\begin{proof}
Lemma~\ref{lemmacomplexbase} guarantees that $\mathcal{C}(\partial D,\C)\setminus\overline{\textup{Pol}(\partial D)}$ is $\mathfrak c$-spaceable.
The maximal-spaceability follows from the fact that every infinite-dimensional Banach space has dimension at least $\mathfrak c$, and $\mathcal{C}(\partial D,\C)$ has dimension $\mathfrak c$ since it is a separable infinite-dimensional Banach space.
\end{proof}


The results from \cite{FPRR} imply the following corollary.

\begin{corollary}
The set $\mathcal{C}(\partial D,\C)\setminus\overline{\textup{Pol}(\partial D)}$ is
\begin{itemize}
    \item[(i)] $(\alpha,\mathfrak c)$-spaceable if and only if $\alpha < \aleph_0$,

    \item[(ii)] pointwise $\mathfrak c$-spaceable,

    \item[(iii)] $(\alpha,\beta)$-dense-lineable for every $\alpha < \mathfrak c$ and $\max\{\alpha,\aleph_0\}\leq \beta \leq \mathfrak c$, and

    \item[(iv)] pointwise maximal-dense-lineable.
\end{itemize}
\end{corollary}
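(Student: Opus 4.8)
The plan is to read the corollary as the specialization of the general Banach-space machinery of \cite{FPRR} to the complement of a single closed subspace, so that no new construction is needed beyond identifying the relevant cardinal invariants. Write $X:=\mathcal{C}(\partial D,\C)$ and $W:=\overline{\textup{Pol}(\partial D)}$. Since $X$ is a separable infinite-dimensional Banach space, one records $\textup{dens}(X)=\aleph_0$ and $\dim(X)=\mathfrak c$; and, as recalled in Section~\ref{complex-plane}, $\textup{Hol}(\partial D)$ is closed in $X$ by Morera's theorem, whence $W$ is a closed subspace of $X$. Thus the whole statement takes place in the setting ``complement of a proper closed subspace of a separable Banach space'', which is exactly the framework covered by \cite{FPRR} (and, for the negative half of (i), by the phenomenon already quoted in Section~\ref{lineabilitynotation} through \cite[Theorem~3]{ABRR}).

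The first genuine step I would carry out is to pin down $\textup{codim}(W)=\mathfrak c$. By Lemma~\ref{lemmacomplexbase}, the set $X\setminus W$ contains $\varphi[\textup{Hol}(\partial D)]\setminus\{0\}$, and $\varphi[\textup{Hol}(\partial D)]$ is a linear subspace (being the image of a subspace under the linear isometry $\varphi$) that meets $W$ only at $0$. Since $\varphi$ is an isometry, this subspace is isometric to the infinite-dimensional space $\textup{Hol}(\partial D)$, so the composition $\varphi[\textup{Hol}(\partial D)]\hookrightarrow X\to X/W$ is injective, forcing $\textup{codim}(W)=\dim(X/W)$ to be infinite; separability then gives $\dim(X/W)=\mathfrak c$. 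Equivalently, we are now in the same cardinal bookkeeping as in Corollary~\ref{corsup}, with the roles of $sw(L')$ and $w(L)$ played by $\mathfrak c$ and $\aleph_0$, and with $\aleph_0\leq\mathfrak c$.

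With $(\textup{dens}(X),\dim(X),\textup{codim}(W))=(\aleph_0,\mathfrak c,\mathfrak c)$ fixed, each item becomes a direct citation. For (i) and (ii) I would invoke the $(\alpha,\beta)$-spaceability results of \cite{FPRR} for complements of closed subspaces of infinite codimension: the ``if'' part of (i) and the pointwise $\mathfrak c$-spaceability of (ii) hold because $\textup{codim}(W)=\mathfrak c$, while the ``only if'' part of (i) is the standard negative phenomenon (one builds an $\aleph_0$-dimensional $V_{\aleph_0}\subseteq (X\setminus W)\cup\{0\}$ whose closure meets $W\setminus\{0\}$, so that $V_{\aleph_0}$ admits no closed extension inside $(X\setminus W)\cup\{0\}$). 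For (iii) and (iv) I would use the dense-lineability results of \cite{FPRR}: since $\textup{dens}(X)=\aleph_0\leq\mathfrak c=\textup{codim}(W)$, the set $X\setminus W$ is $(\alpha,\beta)$-dense-lineable in the stated ranges $\alpha<\mathfrak c$ and $\max\{\alpha,\aleph_0\}\leq\beta\leq\mathfrak c$; and, being maximal-lineable by the preceding maximal-spaceability theorem, it is pointwise maximal-dense-lineable by \cite[Corollary~4.4]{FPRR}.

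The main obstacle is bookkeeping rather than analysis: one must verify that the quantitative hypotheses of each cited theorem of \cite{FPRR} are met with precisely these parameters, and, in particular, that the ``only if'' direction in (i) uses only that $W$ is closed of infinite codimension and not any special feature of $\textup{Pol}(\partial D)$. Once the codimension is fixed at $\mathfrak c$ by Lemma~\ref{lemmacomplexbase}, the corollary is the verbatim specialization of the \cite{FPRR} machinery (equivalently, of Corollary~\ref{corsup}'s cardinal computation) to $X=\mathcal{C}(\partial D,\C)$ and $W=\overline{\textup{Pol}(\partial D)}$.
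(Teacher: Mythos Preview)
Your proposal is correct and follows the same approach as the paper: the paper's entire proof is the single sentence ``The results from \cite{FPRR} imply the following corollary,'' and your write-up is precisely an unpacking of that citation after recording $\textup{codim}(W)=\mathfrak c$ via Lemma~\ref{lemmacomplexbase} and the preceding maximal-spaceability theorem. One cosmetic remark: the line ``$\textup{Hol}(\partial D)$ is closed in $X$ by Morera's theorem, whence $W$ is a closed subspace'' is unnecessary for $W$ itself (a closure is always closed); what Morera buys you is that $\varphi[\textup{Hol}(\partial D)]$ is \emph{closed}, which is what makes the spaceability conclusion of the previous theorem go through.
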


To end this section, we will study the strong algebrability of $\mathcal{C}(\partial D,\C)\setminus\overline{\textup{Pol}(\partial D)}$.

\begin{theorem} 
    The set $\mathcal{C}(\partial D,\C)\setminus\overline{\textup{Pol}(\partial D)}$ is strongly $\mathfrak c$-algebrable.
\end{theorem}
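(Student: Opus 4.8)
The plan is to produce an explicit family of $\mathfrak c$ functions that freely generates a free subalgebra contained in $(\mathcal{C}(\partial D,\C)\setminus\overline{\textup{Pol}(\partial D)})\cup\{0\}$. The engine behind the construction is an observation that refines Lemma~\ref{lemmacomplexbase}: the set $\varphi[\textup{Hol}(\partial D)]$ is actually a (non-unital) subalgebra of $\mathcal{C}(\partial D,\C)$. Indeed, a direct computation gives $\varphi(f)(z)\varphi(g)(z)=\bar z^2 f(\bar z)g(\bar z)=\bar z\,h(\bar z)=\varphi(h)(z)$, where $h(w)=w\,f(w)g(w)\in\textup{Hol}(\partial D)$. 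Hence any product, and thus any polynomial without constant term, of elements of $\varphi[\textup{Hol}(\partial D)]$ lies again in $\varphi[\textup{Hol}(\partial D)]$, and by Lemma~\ref{lemmacomplexbase} every nonzero such element avoids $\overline{\textup{Pol}(\partial D)}$. Therefore strong $\mathfrak c$-algebrability will follow as soon as I exhibit a free family of $\mathfrak c$ generators inside $\varphi[\textup{Hol}(\partial D)]$: for such a family every nonzero no-constant-term polynomial combination is automatically nonzero and lands in the target set, which is exactly the criterion for free generation recalled before the statement.

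For the generators I would use exponentials. I fix a set $B\subseteq\R$ linearly independent over $\Q$ and of cardinality $\mathfrak c$ (a Hamel basis works) and set $f_\tau(z):=e^{\tau\bar z}-1$ for $\tau\in B$. Each $f_\tau$ belongs to $\varphi[\textup{Hol}(\partial D)]$, since writing $u_\tau(w)=(e^{\tau w}-1)/w$, which is entire and hence in $\textup{Hol}(\partial D)$, one has $f_\tau=\varphi(u_\tau)$. It is convenient to record these functions through the change of variable $\zeta=\bar z$, a self-homeomorphism of $\partial D$: with $H_\tau(\zeta)=e^{\tau\zeta}-1$ one has $f_\tau(z)=H_\tau(\bar z)$, and since $g\mapsto g(\bar z)$ is an isometric algebra automorphism of $\mathcal{C}(\partial D,\C)$, freeness of $\{f_\tau\}$ is equivalent to freeness of $\{H_\tau\}$.

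The heart of the argument is checking that $\{H_\tau:\tau\in B\}$ is free. Given distinct $\tau_1,\dots,\tau_n\in B$ and a nonzero polynomial $P$ in $n$ variables without constant term, I would substitute $y_i=e^{\tau_i\zeta}$ and write $P(H_{\tau_1},\dots,H_{\tau_n})=\widetilde P(y_1,\dots,y_n)$ with $\widetilde P(y)=P(y_1-1,\dots,y_n-1)$, which is a nonzero polynomial in the $y_i$. Expanding produces a finite combination $\sum_{\vec k}c_{\vec k}\,e^{(k_1\tau_1+\cdots+k_n\tau_n)\zeta}$, and here the $\Q$-linear independence of the $\tau_i$ guarantees that distinct exponent vectors $\vec k$ yield distinct exponents $\lambda_{\vec k}=\sum_i k_i\tau_i$, so the nonzero coefficients of $\widetilde P$ are not lost to cancellation. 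Since pairwise distinct exponentials $\zeta\mapsto e^{\lambda\zeta}$ are linearly independent as entire functions, this combination is a nonzero entire function; were it to vanish on all of $\partial D$, it would vanish on a set with an accumulation point and hence identically, a contradiction. Thus $P(H_{\tau_1},\dots,H_{\tau_n})\neq0$, and transporting back through the automorphism gives $P(f_{\tau_1},\dots,f_{\tau_n})\neq0$, so $\{f_\tau:\tau\in B\}$ generates a free algebra of $\mathfrak c$ generators.

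The main obstacle is precisely this freeness step, since one must simultaneously rule out every nontrivial polynomial relation; the two ingredients that make it work—choosing the exponents $\Q$-linearly independent so that distinct monomials give distinct exponential frequencies, and invoking the linear independence of distinct exponentials—are exactly what the exponential choice of generators is designed to provide. The remaining bookkeeping, namely the membership of each $f_\tau$ in $\varphi[\textup{Hol}(\partial D)]$ together with the fact that nonzero elements of that subalgebra lie outside $\overline{\textup{Pol}(\partial D)}$, is routine given Lemma~\ref{lemmacomplexbase}, and combining it with the freeness yields a $\mathfrak c$-generated free algebra inside $(\mathcal{C}(\partial D,\C)\setminus\overline{\textup{Pol}(\partial D)})\cup\{0\}$, as required.
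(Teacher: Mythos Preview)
Your proof is correct and follows essentially the same strategy as the paper: both pick $\mathfrak c$ many exponential-type generators with $\Q$-linearly independent frequencies, use that independence to force distinct exponents in any monomial expansion, and invoke linear independence of exponentials together with containment in $\varphi[\textup{Hol}(\partial D)]$ (via Lemma~\ref{lemmacomplexbase}) to conclude. The paper uses the generators $\bar z e^{h\bar z}$ and checks by a direct computation that each polynomial combination lands in $\varphi[\textup{Hol}(\partial D)]$, whereas your preliminary observation that $\varphi[\textup{Hol}(\partial D)]$ is itself a subalgebra lets you handle that membership once and for all---a mild but pleasant streamlining.
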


\begin{proof}
Let $\mathcal H$ be a Hamel basis of $\R$ over $\Q$. 
    (Recall that $|\mathcal H|=\mathfrak c$.)
    
    It is well-known (and not difficult to prove) that the functions
    $\{ z^a e^{bz} : a,b \in \C\}$ are linearly independent over $\C$ and, therefore, the functions in $\{ \bar{z}^a e^{b\bar{z}} : a,b \in \C\}$ are linearly independent. Hence, observe that the functions in 
        $$
        \mathcal B := \{ \bar{z} e^{h\bar z} : h \in \mathcal H \}
        $$
    are algebraically independent.
    Indeed, if $P$ is a nonzero complex polynomial with $n$ variables and without constant term, we have that
        \begin{equation}\label{equ:1}
            P(\bar{z}e^{h_1\bar z},\ldots,\bar{z}e^{h_n\bar z}) = \sum_{j=1}^k a_j \bar{z}^{\sum_{i=1}^n m_{i,j}} e^{\sum_{i=1}^n m_{i,j}h_i \bar{z}}
        \end{equation}
    where $k\in \mathbb N$, $h_1,\ldots,h_n$ are distinct, $a_j\in \C$ for each $1\leq j\leq m$, $(m_{1,j},\ldots,m_{n,j})\in (\N\cup \{0\})^n$ with $\sum_{i=1}^n m_{i,j}\geq 1$ for all $1\leq j\leq k$ and $(m_{1,j_1},\ldots,m_{n,j_1})\neq (m_{1,j_2},\ldots,m_{n,j_2})$ provided that $j_1\neq j_2$.
    Then, $P(\bar{z}e^{h_1\bar z},\ldots, \bar{z}e^{h_n\bar z})\equiv 0$ only when $a_j=0$ for all $j$ by the $\C$-linear independence of $\{ \bar{z}^a e^{b\bar{z}} : a,b \in \C\}$ since the numbers $\sum_{i=1}^n m_{i,1}h_i,\ldots,\sum_{i=1}^n m_{i,k}h_i$ are mutually distinct by the $\Q$-linear independence of $\mathcal H$.

  It remains to prove that $P(\bar{z}e^{h_1\bar z},\ldots, \bar{z}e^{h_n\bar z})\in(\mathcal{C}(\partial D,\C)\setminus\overline{\text{Pol}(\partial D)})\cup\{0\}$. For this, it will suffice to show that $P(\bar{z}e^{h_1\bar z},\ldots, \bar{z}e^{h_n\bar z})\in\varphi[\text{Hol}(\partial D)]$ or, since $\varphi$ is an involution, that $\varphi(P(\bar{z}e^{h_1\bar z},\ldots, \bar{z}e^{h_n\bar z}))\in\text{Hol}(\partial D)$. For this, simply note that
    \begin{align*}
        &\varphi\left(\sum_{j=1}^k a_j \bar{z}^{\sum_{i=1}^n m_{i,j}} e^{\sum_{i=1}^n m_{i,j}h_i \bar{z}}\right)=\bar{z}\left(\sum_{j=1}^k a_j z^{\sum_{i=1}^n m_{i,j}} e^{\sum_{i=1}^n m_{i,j}h_i z}\right)\\
        &=z^{-1}\left(\sum_{j=1}^k a_j z^{\sum_{i=1}^n m_{i,j}} e^{\sum_{i=1}^n m_{i,j}h_i z}\right)=\sum_{j=1}^k a_j z^{\sum_{i=1}^n m_{i,j}-1} e^{\sum_{i=1}^n m_{i,j}h_i z}
    \end{align*}
    where we have used the fact that $\bar{z}=z^{-1}$ in $\partial D$. This last function is clearly holomorphic since $\sum_{i=1}^n m_{i,j}\geq1$ for every $1\leq j\leq n$.
\end{proof}

\vspace{0.3cm}
\noindent
{\bf Acknowledgments}: The authors thank Tommaso Russo for his valuable discussions on various aspects of this manuscript and for providing helpful references.

\vspace{0.5cm}

\noindent
{\bf Funding information}: Sheldon Dantas was supported by The Spanish AEI Project PID2019 - 106529GB - I00 / AEI / 10.13039/501100011033,
the Generalitat Valenciana project CIGE/2022/97 and 
he was also funded by the grant PID2021-122126NB-C33 funded by 
MICIU/AEI/10.13039/501100011033 and by ERDF/EU.

 \end{document}